\newcommand{\logmessage}[1]{\@latex@warning{#1}}
\def\RR{{\rm I\hspace{-0.50ex}R} }
\def\d{\partial}
\def\O{\Omega}
\def\E{\mathcal E}
\def\R{{\mathbb R}}
 \def\cD{{\mathcal D}}
 \def\E{{\mathcal E}}
 \def\hz{\hat z}
\newcommand{\jump}[1]{\left[ #1\right]}
\DeclareMathOperator{\argmin}{arg\,min}
\newtheorem{theorem}{Theorem}[section]
\newtheorem{lemma}[theorem]{Lemma}
\newtheorem{remark}[theorem]{Remark}
\newenvironment{proof}{{\textbf{Proof.}}}{\hfill \textbf{$\square$}\vspace{0.2cm}}
\title{Application of the $j$-subgradient in a problem of electropermeabilisation}
\author{Zakaria Belhachmi
\thanks{Laboratoire de Math\'ematiques LMIA, Universit\'e de Haute Alsace, 4, rue des Fr\`eres Lumi\`ere,
68096 Mulhouse, FRANCE.(\tt{zakaria.belhachmi@uha.fr})} \and
Ralph Chill
\thanks{
Institut f\"ur Analysis, Fachrichtung Mathematik, TU
Dresden, 01062 Dresden, Germany.(\tt{ralph.chill@tu-dresden.de})}
}
\begin{document}

\maketitle

%\date{\today}

%\maketitle

\begin{abstract}
We study a coupled elliptic-parabolic Poincar\'e-Steklov system arising in electrical cell activity in biological tissues. By using the notion of $j$-subgradient, we show that this system has a gradient structure and thus obtain wellposedness. We further exploit the gradient structure for the discretisation of the problem and provide numerical experiments.
\end{abstract}

\section{Models and problem formulation}

Various problems in fluid mechanics, contact mechanics, heat transfer or diffusion across membranes lead to parabolic or coupled elliptic-parabolic systems of partial differential equations (or inequations) with nonlinear, dynamical conditions prescribed on a Riemannian manifold $\Gamma$ (see \cite{Li69}). 

We consider in this article the problem 
\begin{equation}\label{eq:mod1}
\begin{split} %{cl}
-\Delta u(t,x) & = 0 \quad \text{in } \RR^+ \times (\Omega_i\cup\Omega_e) , \\
\d_t\jump{u}+s(\jump{u}) -\sigma_e \, \d_{n_e} u_e & = 0 \quad \text{on } \RR^+ \times\Gamma ,\\
\jump{\sigma \d_n u} & = 0 \quad \text{on } \RR^+ \times \Gamma , \\
u_i & = g_i \quad \text{on } \RR^+ \times (\d\Omega_i \setminus \Gamma ), \\
u_e & = g_e \quad \text{on } \RR^+ \times (\d\Omega_e \setminus \Gamma ) , \\
u(0,\cdot ) & = u_0 \quad \text{in } \Omega_i\cup\Omega_e .
\end{split}
\end{equation}
Here, $s$ is a given real function, $\Gamma$ is a Lipschitz regular manifold, $\Omega_i$ and $\Omega_e$ are two disjoint, open sets with Lipschitz regular boundary such that
\begin{align*}
 & \Gamma \subseteq \partial \Omega_i \cap \partial\Omega_e ,
\end{align*}
and 
\[
 [u] = u_i|_\Gamma - u_e|_\Gamma 
\]
is the difference of the traces of ${u}_i := {u}|_{\Omega_i}$ and ${u}_e := {u}|_{\Omega_e}$ on the part of the common boundary $\Gamma$. Moreover, $g\in H^1 (\Omega_i\cup\Omega_e )$, and we denote by ${g}_i := {g}|_{\Omega_i}$ and ${g}_e := {g}|_{\Omega_e}$ the restrictions of the function $g$, as well as their traces on $\partial\Omega_i\setminus\Gamma$ and $\partial\Omega_e\setminus\Gamma$, respectively; there will be no danger of confusion when we denote the functions in the interiors and on the boundaries by the same letter. We denote by $n_i$ and $n_e$ the outer normal derivatives at the boundaries of $\Omega_i$ and $\Omega_e$, respectively, and we denote by
\[
 [\sigma \partial_n u] = \sigma_i \partial_{n_i} u_i + \sigma_e \partial_{n_e} u_e = \sigma_i \partial_{n_i} u_i - \sigma_e \partial_{n_i} u_e
\]
the jump of the outer normal derivatives on $\Gamma$; note that $n_i = -n_e$ almost everywhere on $\Gamma$. Here, $\sigma_i$, $\sigma_e >0$ are the (constant) conductivities in $\Omega_i$ and $\Omega_e$, respectively. In the applications which we have in mind, $\Omega_i$ plays the role of an interior domain, $\Omega_e$ is an exterior domain, $\partial\Omega_i = \Gamma$ and $\partial\Omega_e = \Gamma \dot\cup\partial\Omega$. 

When the manifold $\Gamma$ is the external boundary of a set $\Omega$, a gradient system structure has already been identified for similar problems, namely for problems involving the Dirichlet to Neumann-Steklov-Poincar\'e operator. By applying a recent approach from Chill, Hauer \& Kennedy \cite{ChHaKe14}, we identify an abstract gradient system structure for the problem \eqref{eq:mod1}, and thus provide a unified framework to solve it. The point in this approach is that the gradient structure is identified on the boundary space $L^2 (\Gamma )$, where the actual evolution takes place, but we work with an energy defined on $H^1 (\Omega_i\cup\Omega_e)$. We emphasize that in the gradient system framework, a standard and complete theory for wellposedness, regularity, asymptotic behavior, as well as a large choice of efficient numerical methods for computing solutions are well established, and in particular, a large class of steepest descent methods and optimization approaches with well known properties are ready to use. 

Following the seminal work of Hodgkin \& Huxley \cite{HoHu52},  a lot of examples of systems of equations like problem \eqref{eq:mod1} were considered in the study of the electrical cell activity in biological tissues \cite{Fi55, Fi83, Fi81}. 
As a particular example, we consider a revisited version of a model introduced recently by Kavian, Legu\`ebe, Poignard \& Weynans \cite{KLPW14} for the electropermeabilisation (or electroporation) of the membrane of a cell subjected to a short electric pulse. Roughly speaking, under a high transmembrane (electric) potential, the membrane becomes more permeable, thus allowing the diffusion of some molecules; we refer the interested reader to \cite{NeKr99, TeGoRo05, IvViMi10, PePo13, KLPW14} and the references therein for more details on the modelling and the numerous applications of this problem.
In their article, Kavian et al. proposed and analysed a mathematical problem to describe qualitatively the electropermeabilisation for a single cell. They considered a static and a dynamical model with a function $s$ ensuring a smooth transmission between two states of the membrane conductivity. We emphasize that their dynamical model does not fit into our approach limited to autonomous systems like \eqref{eq:mod1} with the function $s$ independent of time, however, we have less restrictive assumptions for $s$, $\Omega_i$ and $\Omega_e$ which enlarge the type of problems for which we can identify the abstract gradient structure. In principle, it is straightforward to generalise the theory to quasilinear equations, for example, for equations where the Laplace operator is replaced by the nonlinear $p$-Laplace operator (see Remark \ref{rem.extensions} below). 

The article is organised as follows. In Section \ref{sec.gradient.structure} we present the theoretical background which leads to the observation that the coupled elliptic-parabolic system \eqref{eq:mod1} is a gradient system. Well-posedness and regularity of solutions then follows from classical results. In Section \ref{sec.discretisation} we discuss the discretisation of the problem \eqref{eq:mod1} relying on the theoretical framework and results obtained in Section \ref{sec.gradient.structure}. In Section \ref{sec.numerics} we present numerical experiments based on the abstract results. We compare the numerical solution with an analytical solution in the context of a simple geometry and a linear transmission law, and provide numerical solutions in the context of nonlinear transmission laws or more complicated geometries.

\section{Gradient structure} \label{sec.gradient.structure}

Before turning our attention to the evolution problem \eqref{eq:mod1}, we consider the stationary problem
\begin{equation} \label{eq.stationary}
 \begin{split}
  -\Delta {u} & = 0 \quad \text{in } \Omega_i \cup \Omega_e , \\
  [\sigma \partial_n {u}] & = 0 \quad \text{on } \Gamma , \\
  s([{u}]) - \sigma_e \, \partial_{n_e} {u}_e & = f  \quad \text{on } \Gamma , \\
  {u}_i & = g_i \quad \text{on } \partial\Omega_i\setminus\Gamma , \\
  {u}_e & = g_e \quad \text{on } \partial\Omega_e\setminus\Gamma , 
 \end{split}
\end{equation}
with a given right-hand side $f\in L^2 (\Gamma )$ and a given function $g\in H^1 (\Omega_i\cup\Omega_e)$. Here again
\[
 [{u}] = {u}_i|_\Gamma - {u}_e|_\Gamma 
\]
is the difference of the traces of ${u}_i := {u}|_{\Omega_i}$ and ${u}_e := {u}|_{\Omega_e}$ on the common part of the boundary $\Gamma$, and $[\sigma \partial_n u] = \sigma_i \partial_{n_i} u_i - \sigma_e \partial_{n_i} u_e$ is the jump of the outer normal derivatives. Let 
\[
 H^1_{0,\Gamma} (\Omega_i\cup\Omega_e ) := \{ u\in H^1 (\Omega_i\cup\Omega_e ) : u_i|_{\partial\Omega_i\setminus\Gamma} = 0 \text{ and }  u_e|_{\partial\Omega_e\setminus\Gamma} = 0 \} .
\]
We say that a function ${u}\in H^1 (\Omega_i \cup\Omega_e )$ is a {\em weak solution} of the stationary problem \eqref{eq.stationary} if $u-g\in H^1_{0,\Gamma} (\Omega_i\cup\Omega_e )$ and, for every ${v}\in H^1_{0,\Gamma} (\Omega_i \cup\Omega_e )$,
\[
 \int_\Omega \sigma \, \nabla {u} \nabla {v} + \int_\Gamma s([{u}]) \, [{v}] = \int_\Gamma f \, [{v}] ,
\]
where $\sigma$ is piecewise constant, namely $\sigma := \sigma_i$ on $\Omega_i$ and $\sigma := \sigma_e$ on $\Omega_e$. Observe that if ${u}$ is a weak solution of the stationary problem, then it satisfies the boundary conditions on $\partial\Omega_i\setminus\Gamma$ and $\partial\Omega_e\setminus\Gamma$ in a weak sense, and
\[
 -\Delta {u} = 0 \text{ in } \cD (\Omega_i \cup \Omega_e )' ,
\]
as one can see by considering test functions ${v}\in\cD (\Omega_i \cup \Omega_e )$ in the definition of a weak solution. Then the Gau{\ss}-Green formula implies, at least if ${u}$ is regular enough, that for every ${v}\in H^1_{0,\Gamma} (\Omega_i \cup \Omega_e )$, 
\begin{align*}
\int_\Gamma f [{v}] & = \int_\Gamma \sigma_i\, \partial_{n_i} {u}_i {v}_i + \int_\Gamma \sigma_e\, \partial_{n_e} {u}_e {v}_e
+ \int_\Gamma s([{u}]) \, [{v}] \\ 
& = \int_\Gamma [\sigma \partial_n {u}] {v}_i - \int_\Gamma \sigma_e\, \partial_{n_e} {u}_e [{v}] 
+ \int_\Gamma s([{u}]) \, [{v}] ,
\end{align*}
and from here one sees that the two remaining boundary conditions on $\Gamma$ are satisfied, too. 

Accordingly, we call a function $u\in L^2_{loc} (\R_+; H^1 (\Omega_i\cup\Omega_e ))$ a {\em weak solution} of the evolution problem \eqref{eq:mod1} if $u-g\in H^1_{0,\Gamma} (\Omega_i\cup\Omega_e )$ for almost every $t\in\R_+$, $[u]\in C(\R_+ ; L^2 (\Gamma ))\cap H^1_{loc} ((0,\infty ); L^2 (\Gamma ))$, $[u]|_{t=0} = u_0$, and for every $v\in H^1_{0,\Gamma} (\Omega_i\cup\Omega_e)$ one has
\[
 \int_\Omega \sigma \, \nabla {u} \nabla {v} + \int_\Gamma s([{u}]) \, [{v}] = - \int_\Gamma \partial_t [{u}] \, [{v}] \text{ for almost every } t\in\R_+ .
\]

As pointed out in the Introduction, we show existence and uniqueness of weak solutions by showing that the evolution problem \eqref{eq:mod1} has a gradient structure.

For this, we follow the approach which has recently been developped in Chill, Hauer \& Kennedy \cite{ChHaKe14} and which is in some sense hidden in the definition of weak solution of the stationary problem or the evolution problem. More precisely, we consider the energy space $V:= H^1 (\Omega_i \cup \Omega_e )$, the reference Hilbert space $H:= L^2 (\Gamma )$, the bounded, linear operator
\begin{align*}
 j: H^1 (\Omega_i \cup \Omega_e) & \to L^2 (\Gamma ) , \\
 {u} & \mapsto [ {u} ] ,
\end{align*}
and the energy $\E : H^1 (\Omega_i \cup \Omega_e ) \to \R\cup\{+\infty\}$ given by
\[
 \E ({u}) = \begin{cases}
             \frac12 \int_\Omega \sigma\, |\nabla {u}|^2 + \int_\Gamma S([{u}] ) & \text{if } u -g \in H^1_{0,\Gamma} (\Omega_i\cup\Omega_e) , \\[2mm]
             +\infty & \text{else} ,
            \end{cases}
\]
where $S$ is a primitive of $s$. For the effective domain one has the equality $D(\E ) = g+H^1_{0,\Gamma} (\Omega_i\cup\Omega_e)$, and the energy is continuously differentiable on this affine subspace  
as one easily verifies. Moreover, $\E$ is globally {\em $j$-quasiconvex} and {\em $j$-quasicoercive} in the sense that the ``shifted'' energy
\begin{align*}
 \E_\omega : H^1_{0,\Gamma} (\Omega_i \cup \Omega_e ) & \to \R , \\
 {u} & \mapsto \E ({u} ) + \frac{\omega}{2} \, \int_\Gamma [{u}]^2 
\end{align*}
is convex and coercive for every $\omega$ large enough; in fact, $\omega >L$ is sufficient, where $L\geq 0$ is the Lipschitz constant of $s$. Recall that coercivity of $\E_\omega$ means that the sublevels $\{\E_\omega \leq c\}$ are bounded for every $c\in\R$; it follows in this special case by an application of the first Poincar\'e inequality. We then define the {\em $j$-subgradient} of $\E$ by
\begin{equation} \label{def.subgradient}
\begin{split}
 \partial_j\E & := \{ (w,f) \in L^2(\Gamma ) \times L^2 (\Gamma ) : \text{there exists } {u}\in D(\E ) \text{ s.t.} \\
& \phantom{:= \{ (w,f) \in L^2(\Gamma )}  w = [{u}] \text{ and for every } {v}\in H^1_{0,\Gamma} (\Omega_i \cup \Omega_e ) \text{ one has} \\
& \phantom{:= \{ (w,f) \in L^2(\Gamma )} \liminf_{t\searrow 0} \frac{\E ({u}+t{v}) - \E ({u})}{t} \geq \int_\Gamma f \, [{v}] \} \\
& = \{ (w,f) \in L^2(\Gamma ) \times L^2 (\Gamma ) : \text{there exists } {u}\in H^1 (\Omega_i \cup \Omega_e ) \text{ s.t.} \\
& \phantom{:= \{ (w,f) \in L^2(\Gamma )} u-g\in H^1_{0,\Gamma} (\Omega_i\cup\Omega_e), \, w = [{u}] \text{, and} \\
& \phantom{:= \{ (w,f) \in L^2(\Gamma )} \text{for every } {v}\in H^1 (\Omega_i \cup \Omega_e ) \text{ one has} \\
& \phantom{:= \{ (w,f) \in L^2(\Gamma )} \int_\Omega \sigma \nabla {u} \nabla {v} + \int_\Gamma s([{u}]) \, [{v}] = \int_\Gamma f \, [{v}] \}
\end{split}
\end{equation}
The equality between the first and the second line follows from the identification of the effective domain, from the fact that $\E$ is continuously differentiable in the affine subspace $D(\E )$, and the special form of its derivative (in fact, G\^{a}teaux differentiable would be sufficient). The following important and at the same time almost trivial lemma is an immediate consequence of the definition of weak solution of the stationary problem \eqref{eq.stationary} and of the definition of the $j$-subgradient. 

\begin{lemma}
One has $(w,f)\in\partial_j\E$ and $w=[{u}]$ as in the definition of $\partial_j\E$, if and only if ${u}$ is a weak solution of the stationary problem \eqref{eq.stationary}. 
\end{lemma}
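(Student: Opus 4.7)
The plan is to observe that the lemma is almost tautological: the second characterization of $\partial_j\E$ supplied in \eqref{def.subgradient} is, line for line, the weak formulation of \eqref{eq.stationary}, so the proof reduces to lining up the two statements.

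First, I would fix $(w,f)\in\partial_j\E$ with $w=[u]$ and invoke the second of the two equivalent characterizations in \eqref{def.subgradient}. Its equivalence with the liminf-formulation in the first line rests on the observation, already recorded in the paragraph preceding the lemma, that $\E$ is G\^ateaux differentiable on the affine subspace $D(\E)$ with derivative at $u$ in the direction $v\in H^1_{0,\Gamma}(\Omega_i\cup\Omega_e)$ equal to $v\mapsto\int_\Omega\sigma\,\nabla u\nabla v+\int_\Gamma s([u])[v]$; in particular the liminf inequality automatically becomes an equality on the tangent space, and ``$\geq$'' for all $v$ is equivalent to ``$=$'' for all $v$ because the tangent space is a linear subspace. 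This directly yields that $u\in H^1(\Omega_i\cup\Omega_e)$, $u-g\in H^1_{0,\Gamma}(\Omega_i\cup\Omega_e)$, $w=[u]$, and
\[
\int_\Omega\sigma\,\nabla u\nabla v+\int_\Gamma s([u])\,[v]=\int_\Gamma f\,[v]
\]
for every admissible test function.

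Second, I would compare this with the definition of weak solution of \eqref{eq.stationary} given earlier in the section: the membership condition $u-g\in H^1_{0,\Gamma}(\Omega_i\cup\Omega_e)$ and the integral identity match verbatim, so $u$ is a weak solution precisely when $(w,f)\in\partial_j\E$ with $w=[u]$. The converse implication is obtained simply by reading the chain of equivalences in reverse.

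There is no serious obstacle to overcome; all the content of the lemma has been absorbed into the equivalence of the two descriptions of $\partial_j\E$ in \eqref{def.subgradient}. That equivalence is the one step one would actually check with a short computation, relying on the fact that $\E$ is the sum of a quadratic form in $\nabla u$ and a composition $u\mapsto\int_\Gamma S([u])$ with $S$ a primitive of the Lipschitz function $s$, both of which are continuously differentiable on $D(\E)$.
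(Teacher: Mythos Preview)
Your proposal is correct and matches the paper's approach: the paper gives no formal proof but simply states that the lemma is ``an immediate consequence of the definition of weak solution of the stationary problem \eqref{eq.stationary} and of the definition of the $j$-subgradient.'' You have merely made explicit the G\^ateaux-differentiability argument that the paper already records in the paragraph preceding the lemma to justify the second description in \eqref{def.subgradient}.
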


Note that the definition of the $j$-subgradient differs from the usual variational setting in the sense that the energy is not defined on the space $L^2 (\Gamma )$ itself, so that the $j$-subgradient is not a classical subgradient as defined, for example in \cite{Br73}. Moreover, we are also not in the usual variational setting of a Gelfand triple in which one has, in particular, a dense embedding of the energy space $V$ into the Hilbert space $H = L^2 (\Gamma )$. Our operator $j$ has dense range in $L^2 (\Gamma )$, but it is clearly not injective since the space of test functions on $\Omega_i\cup\Omega_e$ belongs to the kernel of $j$. Note also that the $j$-subgradient may be a multi-valued operator even if the energy on the energy space $V$ is smooth. 

By \cite[Corollary 2.6]{ChHaKe14}, and since the energy is $j$-quasi\-convex and $j$-quasi\-coercive, the $j$-subgradient $\partial_j\E$ is a maximal quasimonotone operator on $L^2 (\Gamma )$, that is, the ``shifted'' operator $\omega I + \partial_j\E$ is maximal monotone on $L^2 (\Gamma )$. Moreover, by \cite[Corollary 2.6]{ChHaKe14} again, the $j$-subgradient is already a subgradient, that is, there exists a quasiconvex, lower semicontinuous functional $\E^H : L^2 (\Gamma ) \to \R \cup \{+\infty \}$ on the reference Hilbert space such that 
\[
 \partial_j\E = \partial\E^H ,
\]
where $\partial\E^H$ is a classical subgradient. Theoretically, \cite[Theorem 2.8]{ChHaKe14} provides a description of this energy defined on $L^2 (\Gamma )$, but this description seems not to be useful for the discretisation considered below. For the purpose of this section, it is only important to know that such a functional $\E^H$ exists. Moreover, by \cite[Theorem 2.8]{ChHaKe14}, the effective domain of the functional $\E^H$ can be characterised as follows:  
\begin{align*}
 D(\E^H ) & := \{ \E^H <+\infty \} \\
& = j (H^1 (\Omega_i\cup \Omega_e )) \\
& = H^\frac12 (\Gamma ) . 
\end{align*}
Here, the second equality is actually \cite[Theorem 2.8]{ChHaKe14}, while the third equality follows from the theory of traces of Sobolev functions \cite{Ad75}. In particular, the effective domain is dense in $L^2 (\Gamma )$, and hence the same is true for the domain of the $j$-subgradient. From these observations we conclude that our system \eqref{eq:mod1} can be rewritten as an abstract, nonautonomous gradient system of the form
\begin{equation} \label{eq.gradient.system}
 \dot w + \partial_j\E (w) \ni f , \quad w(0) = u_0 ,
\end{equation}
where $w := [u]$ is the unknown function from which one has to compute the original solution $u$ by solving, at each time $t$, an elliptic problem. The identification of the effective domain and the classical theory of maximal monotone operators and subgradients of convex, lower semicontinuous energies (see, for example, Brezis \cite[Th\'eor\`emes 3.2, 3.6]{Br73}) yield well-posedness of this problem in the following sense. 

\begin{theorem}[Existence and uniqueness for the abstract gradient system] \label{thm.existence.w}
For every right-hand side $f\in L^2_{loc} (\R_+; L^2 (\Gamma ))$ and every initial value $u_0\in L^2 (\Gamma )$ the gradient system \eqref{eq.gradient.system} admits a unique solution $w\in C(\R_+ ; L^2 (\Gamma )) \cap H^1_{loc} ((0,\infty ); L^2 (\Gamma ))$ and $w(t)\in D(\partial_j\E )$ for almost every $t\in\R_+$. If, in addition, $u_0\in H^\frac12 (\Gamma )$ (and $f\in L^2_{loc} (\R_+;L^2 (\Gamma ))$), then $w\in H^1_{loc} (\R_+;L^2 (\Gamma ))$. Finally, if $u_0\in L^2 (\Gamma )$ and $f=0$, then $w\in C(\R_+ ;L^2 (\Gamma )) \cap W^{1,\infty}_{loc} ((0,\infty ) ;L^2 (\Gamma ))$.
\end{theorem}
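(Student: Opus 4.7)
The plan is to apply the classical theory of gradient flows driven by subgradients of proper, convex, lower semicontinuous functionals on a Hilbert space, as developed in Brezis \cite[Th\'eor\`emes 3.2, 3.6]{Br73}, on the reference space $H = L^2 (\Gamma )$. The essential preparatory step has already been carried out just above: we know that $\partial_j\E = \partial\E^H$ for a quasiconvex, lower semicontinuous functional $\E^H$ on $L^2 (\Gamma )$ whose effective domain equals $H^\frac12 (\Gamma )$ and is therefore dense in $L^2 (\Gamma )$. To reach Brezis's convex setting, I would fix $\omega > L$ and set $\E^H_\omega := \E^H + \frac{\omega}{2}\norm{\cdot}_{L^2 (\Gamma )}^2$, which is then proper, convex and lower semicontinuous with $\partial\E^H_\omega = \omega I + \partial_j\E$. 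The Cauchy problem \eqref{eq.gradient.system} is equivalent to the perturbed subgradient flow
\[
 \dot w + \partial\E^H_\omega (w) \ni f + \omega w, \qquad w(0) = u_0,
\]
in which the extra term $\omega w$ is globally Lipschitz in $w$. Uniqueness is immediate from monotonicity of $\partial\E^H_\omega$ combined with Gronwall's inequality applied to $\frac{1}{2}\frac{d}{dt}\norm{w_1 - w_2}_{L^2 (\Gamma )}^2 \leq \omega \norm{w_1 - w_2}_{L^2 (\Gamma )}^2$.

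For existence and the first two regularity statements, I would apply Brezis \cite[Th\'eor\`eme 3.6]{Br73} on each bounded interval $[0,T]$ together with a standard Banach fixed-point argument on short intervals to absorb the Lipschitz perturbation $\omega w$ (equivalently, one may invoke directly the nonautonomous perturbation theory for maximal monotone operators, or the quasi-$m$-accretive framework from \cite{ChHaKe14}). For $u_0 \in L^2 (\Gamma ) = \overline{D(\E^H_\omega )}$ and $f\in L^2_{loc} (\R_+; L^2 (\Gamma ))$ this produces a unique strong solution $w\in C(\R_+; L^2 (\Gamma )) \cap H^1_{loc} ((0,\infty ); L^2 (\Gamma ))$ with $w(t)\in D(\partial\E^H_\omega ) = D(\partial_j\E )$ for almost every $t$, which is precisely the first assertion. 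If, in addition, $u_0 \in D(\E^H_\omega ) = H^\frac12 (\Gamma )$, the sharper part of the same theorem removes the singularity at $t = 0$ — since one then has a uniform bound on $\E^H_\omega (w(t))$ on every $[0,T]$ — and upgrades the regularity to $w\in H^1_{loc} (\R_+; L^2 (\Gamma ))$.

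For the autonomous case $f = 0$, I would appeal to the classical regularizing effect for semigroups generated by subgradients (Brezis \cite[Th\'eor\`eme 3.2]{Br73}): for any $u_0 \in \overline{D(\partial\E^H_\omega )} = L^2 (\Gamma )$ the solution satisfies an estimate of the form $\norm{\dot w(t)}_{L^2 (\Gamma )} \leq C(t)$ with $C$ locally bounded on $(0,\infty )$, which immediately yields $w\in W^{1,\infty}_{loc} ((0,\infty ); L^2 (\Gamma ))$. The only genuinely delicate point in the whole argument is that $\E^H$ is merely quasiconvex rather than convex; the shift by $\omega > L$ together with treating $\omega w$ as a Lipschitz perturbation on the right-hand side absorbs this in a completely standard way and affects only an exponential factor in the stability estimates. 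All three regularity statements then come directly from Brezis's theorems once the convex shift has been performed.
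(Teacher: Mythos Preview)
Your proposal is correct and follows essentially the same strategy as the paper: both reduce the theorem to Brezis \cite[Th\'eor\`emes 3.2, 3.6]{Br73} applied to $\partial_j\E = \partial\E^H$, using that $D(\E^H) = H^{1/2}(\Gamma)$ is dense in $L^2(\Gamma)$. The paper does not actually write out a proof; it states the result and then, in the remark immediately following, addresses the only nontrivial point --- quasiconvexity versus convexity --- by asserting that Brezis's arguments (via implicit Euler, Faedo--Galerkin, or Yosida approximation) carry over verbatim to quasiconvex energies. Your route is slightly different in implementation: you shift the energy by $\frac{\omega}{2}\|\cdot\|^2$ to force convexity and then treat the compensating term $\omega w$ as a Lipschitz perturbation on the right-hand side, closing with a fixed-point argument. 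Both reductions are standard; yours has the advantage of citing Brezis's theorems literally rather than asking the reader to re-examine their proofs, while the paper's remark avoids the (admittedly routine) fixed-point layer. Either way the three regularity statements fall out exactly as you describe.
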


\begin{remark}
Strictly speaking, \cite[Th\'eor\`emes 3.2, 3.6]{Br73} only apply to {\em convex}, lower semicontinuous energies, but the proof easily carries over to the case of quasiconvex energies. This is actually true for each of the following methods which may be employed in order to prove the above well-posedness result: the proof by time discretisation (implicit Euler scheme), the proof by space discretisation (the Faedo-Galerkin method), and the proof by Yosida approximations of the subgradient / Moreau-Yosida approximations of the energy, which reduces the gradient system to an ordinary differential equation.
\end{remark}

A lifting yields then that the problem \eqref{eq:mod1} admits for every $u_0\in L^2 (\Gamma )$ a unique weak solution, and this weak solution has the regularity described above.

\begin{theorem}[Existence and uniqueness of solutions of weak solutions of \eqref{eq:mod1}]
For every initial value $u_0\in L^2 (\Gamma )$ the problem \eqref{eq:mod1} admits a unique weak solution $u\in L^2_{loc} (\R_+ ; H^1 (\Omega_i \cup \Omega_e ))$.
\end{theorem}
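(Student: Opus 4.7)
The plan is to lift the solution $w$ of the abstract gradient system \eqref{eq.gradient.system} to a solution $u$ of \eqref{eq:mod1} by inverting the trace operator $j$ through the stationary transmission problem at almost every time. First I would apply Theorem \ref{thm.existence.w} with forcing $f=0$ and the given initial datum $u_0\in L^2(\Gamma)$; this yields a unique $w\in C(\R_+;L^2(\Gamma))\cap H^1_{loc}((0,\infty);L^2(\Gamma))$ with $w(0)=u_0$ and $-\dot w(t)\in\partial_j\E(w(t))$ for almost every $t>0$.

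Second, for every such $t$, the characterisation lemma stating $\partial_j\E=\partial\E^H$ in disguise gives the existence of $u(t)\in H^1(\Omega_i\cup\Omega_e)$ with $u(t)-g\in H^1_{0,\Gamma}(\Omega_i\cup\Omega_e)$, $[u(t)]=w(t)$, and such that $u(t)$ is a weak solution of the stationary problem \eqref{eq.stationary} with right-hand side $-\dot w(t)\in L^2(\Gamma)$. Uniqueness of $u(t)$ from the data $(w(t),\dot w(t))$ follows by testing the difference of two candidate solutions against itself and exploiting the Lipschitz (hence quasimonotone) behaviour of $s$ together with the first Poincar\'e inequality, exactly as was used to establish the $j$-quasicoercivity of $\E_\omega$.

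Third, I would check that the selection $t\mapsto u(t)$ belongs to $L^2_{loc}(\R_+;H^1(\Omega_i\cup\Omega_e))$. Testing the stationary weak formulation against $u(t)-g$ and using the Lipschitz bound on $s$ yields the pointwise a priori estimate
\[
 \|u(t)\|_{H^1(\Omega_i\cup\Omega_e)} \le C\bigl(\|g\|_{H^1}+\|w(t)\|_{L^2(\Gamma)}+\|\dot w(t)\|_{L^2(\Gamma)}\bigr),
\]
and the regularity of $w$ from Theorem \ref{thm.existence.w} makes the right-hand side an element of $L^2_{loc}(\R_+)$. Measurability of $u(\cdot)$ is inherited from the measurability of $w(\cdot)$ and $\dot w(\cdot)$ through the affine continuous dependence of the lifting on its data (which is itself a consequence of Lax--Milgram applied to the strictly monotone principal part). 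Together with $[u(t)]=w(t)\in C(\R_+;L^2(\Gamma))\cap H^1_{loc}((0,\infty);L^2(\Gamma))$, the boundary conditions encoded in $u-g\in H^1_{0,\Gamma}$, and the stationary identity with right-hand side $-\partial_t[u]$, the function $u$ fulfils the definition of a weak solution of \eqref{eq:mod1}.

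Uniqueness then runs the procedure backwards: any weak solution $\tilde u$ of \eqref{eq:mod1} has the property that $\tilde w:=[\tilde u]$ solves the abstract gradient system with $f=0$ and initial datum $u_0$, so $\tilde w=w$ by Theorem \ref{thm.existence.w}, and then $\tilde u=u$ by the uniqueness of the elliptic lifting established above. The main obstacle I expect is not deep but verificational: making the elliptic lifting a genuinely measurable and locally square-integrable selection from $(w,\dot w)$ to $u$, and in particular handling the geometric configuration $\partial\Omega_i=\Gamma$ with care, so that the Poincar\'e inequality still controls $\|u\|_{H^1}$ through the Dirichlet data on $\partial\Omega_e\setminus\Gamma$ combined with the jump constraint on $\Gamma$.
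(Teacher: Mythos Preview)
Your proposal is correct and follows the same overall strategy as the paper: lift the abstract solution $w$ of \eqref{eq.gradient.system} to a weak solution $u$ of \eqref{eq:mod1} by solving, at almost every time, the stationary problem \eqref{eq.stationary} with right-hand side $-\dot w(t)$, and then check uniqueness by running the argument backwards.

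The technical implementation of the lifting differs, however. The paper first shifts to the convex, coercive energy $\E_\omega$ and rewrites the inclusion as $\partial_j\E_\omega(w)\ni \omega w-\dot w$; then, for each $f=\omega w(t)-\dot w(t)$, the lifted function $u(t)$ is characterised as the \emph{unique} minimiser of $v\mapsto \E_\omega(v)-\int_\Gamma f[v]$, and a Lipschitz estimate $\|u_1-u_2\|_{H^1}\le C\|f_1-f_2\|_{L^2(\Gamma)}$ follows from strict convexity and standard monotone-operator arguments. Measurability and $L^2_{loc}$-integrability of $t\mapsto u(t)$ are then immediate. Your route avoids the shift by exploiting that, once $[u(t)]=w(t)$ is prescribed, the nonlinear term $s([u(t)])=s(w(t))$ is a known datum, so the elliptic problem for $u(t)$ is effectively linear; uniqueness then reduces to $\int\sigma|\nabla(u_1-u_2)|^2=0$ (no Lipschitz hypothesis on $s$ is actually needed here, since the $s$-terms cancel), and your a~priori estimate and continuous dependence come from Lax--Milgram rather than from a minimisation argument. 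This is a legitimate and somewhat more elementary variant; the paper's version has the advantage of staying closer to the abstract $j$-subgradient framework and of packaging the dependence on data into a single Lipschitz bound in $f$, whereas yours makes the linear structure of the lifting transparent. Your closing remark about the Poincar\'e inequality when $\partial\Omega_i=\Gamma$ is well taken and is handled exactly as you suggest, via the jump constraint $[u]=w$ together with the Dirichlet data on $\partial\Omega_e\setminus\Gamma$.
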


\begin{proof}
By Theorem \ref{thm.existence.w}, we already have the existence of a solution $w\in W^{1,\infty}_{loc} ((0,\infty );L^2 (\Gamma ))$ of the abstract gradient system \eqref{eq.gradient.system}. Choose $\omega\in\R$ large enough such that $\E_\omega$ is convex and coercive. Then the differential inclusion in \eqref{eq.gradient.system} (with $f=0$) can be rewritten as 
\[
 \omega w + \partial_j\E (w) \ni \omega w - \dot w .
\]
One easily verifies that $\omega w + \partial_j\E (w) = \partial_j \E_\omega (w)$, where, as before, $\E_\omega$ is the shifted energy functional. By definition of the subgradient (see \eqref{def.subgradient}), and by the convexity of $\E_\omega$, we have
\begin{align*}
 \partial_j\E_\omega & = \{ (w,f) \in L^2(\Gamma ) \times L^2 (\Gamma ) : \text{there exists } {u}\in D(\E_\omega ) = D(\E ) \text{ s.t.} \\
& \phantom{:= \{ (w,f) \in L^2(\Gamma )}  w = [{u}] \text{ and for every } {v}\in H^1 (\Omega_i \cup \Omega_e ) \text{ one has} \\
& \phantom{:= \{ (w,f) \in L^2(\Gamma )} \E_\omega ({u}+{v}) - \E_\omega ({u}) \geq \int_\Gamma f \, [{v}] \} \\
& = \{ (w,f) \in L^2(\Gamma ) \times L^2 (\Gamma ) : \text{there exists } {u}\in D(\E_\omega ) = D(\E ) \text{ s.t.} \\
& \phantom{:= \{ (w,f) \in L^2(\Gamma )}  w = [{u}] \text{ and for every } {v}\in H^1 (\Omega_i \cup \Omega_e ) \text{ one has} \\
& \phantom{:= \{ (w,f) \in L^2(\Gamma )} \E_\omega ({u}+{v}) - \int_\Gamma f \, [u+v] \geq \E_\omega ({u}) - \int_\Gamma f \, [{u}] \} .
\end{align*}
As a consequence of this identification, if $(w,f)\in \partial_j\E_\omega$, then there exists $u\in H^1 (\Omega_i\cup\Omega_e)$ such that $u-g\in H^1_{0,\Gamma} (\Omega_i\cup\Omega_e)$ and 
\begin{equation} \label{eq.argmin}
 u = \argmin \, (\E_\omega (v) - \int_\Gamma f [v] ) .
\end{equation}
By choosing $\omega$ even larger, if necessary, we see from the special form of the energy $\E$ that the function
\begin{align*}
 H^1 (\Omega_i\cup\Omega_e ) & \to \R\cup\{+\infty\} , \\
 v & \mapsto \E (v) + \frac{\omega}{2} \int_\Gamma [v]^2 - \int_\Gamma f [v]
\end{align*}
is {\em strictly} convex. Hence, the minimizer in \eqref{eq.argmin} is uniquely determined. Standard arguments for classical subgradients and inverses of strictly monotone operators yield that there exists a constant $C\geq 0$ such that for any pair $u_1$, $u_2\in H^1 (\Omega_i\cup\Omega_e )$ of solutions of the minimisation problem \eqref{eq.argmin} for given functions $f_1$, $f_2\in L^2 (\Gamma )$ one has
\[
 \| u_1 -u_2\|_{H^1 (\Omega_i\cup\Omega_e)} \leq C\, \| f_1-f_2\|_{L^2 (\Gamma )} .
\]
Applying these observations on the differential inclusion above, we find that there exists a unique $u\in L^2 (0,T; H^1 (\Omega_i\cup\Omega_e))$ such that $u-g\in H^1_{0,\Gamma} (\Omega_i\cup\Omega_e)$ and $[u] = w$ almost everywhere. By construction, $u$ is the unique weak solution of \eqref{eq:mod1}.
\end{proof}

\begin{remark} \label{rem.extensions}
 We repeat that our approach to proving well-posedness of the system \eqref{eq:mod1} is formally restricted to the case when the energy does not depend on time, but the framework we are working in allows us to consider several possible generalisations.

(a) The theory works in the same way if we choose $s$ to be a function of the form $s=s_0 +s_1$, where $s_0$ is monotone (nondecreasing) and $s_1$ is globally Lipschitz continuous. The energy $\E$ is defined in the same way, with a primitive of $S$, but its effective domain is in general no longer an affine subspace, at least if $s_0$ has superlinear growth. In this case $\E$ is no longer G\^ateaux differentiable on $g+H^1_{0,\Gamma} (\Omega_i\cup\Omega_e )$, but merely lower semicontinuous. The $j$-subgradient $\partial_j\E$ is then only defined by the first line in \eqref{def.subgradient}. However, the energy will still be quasiconvex and quasicoercive, so that the abstract problem \eqref{eq.gradient.system} is still well-posed in the sense described above.

(b) Similarly, like in the case of the Dirichlet-to-Neumann operator considered in \cite{ArEl12, AEKS14} (linear case) and \cite{ChHaKe14} (nonlinear case), the regularity assumptions on $\Omega_i$ and $\Omega_e$ may be considerably relaxed. It suffices to assume that $\partial\Omega_i$, $\partial\Omega_e$ and $\Gamma$ have locally finite $(d-1)$-dimensional Hausdorff measure. Traces are then to be understood in a weaker sense; see \cite{Da00, ChHaKe14} for the definition which goes back to Mazya \cite{Mz85}.

(c) The method shows that the Laplace operator may be replaced by the $p$-Laplace operator or any other nonlinear elliptic operator with variational structure. This might be of importance if in the applications described in the Introduction it becomes necessary to consider a larger class of models with nonlinear diffusion operators. In the present work we shall show some numerical experiments, and we have therefore restricted ourselves to the case of semilinear problems with the Laplace operator as leading operator.
\end{remark}

\section{Discretisation} \label{sec.discretisation}

In this section we propose to find approximate solutions of the problem \eqref{eq:mod1} by using a semi-discrete implicit time scheme, that is, given a time step $h>0$, we are seeking a sequence $(z^n)_{n=0}^{[T/h]}$, thought to be an approximation of $(u(nh))_{n=0}^{[T/h]}$, where $u$ is a solution of \eqref{eq:mod1}. More precisely, $(z^n)$ is a solution of the discrete system
\begin{align*}
 & \frac{z^{n+1}-z^n}{h}+\d_j\E(z^{n+1})\ni 0 , \\
 & z^0 = u_{0,h} .
\end{align*}
Recalling that $\partial_j\E$ is actually a subgradient of some energy $\E^H$ defined on $L^2 (\Gamma )$, it is well known that this system is equivalent to solving in each step a minimisation problem, and so we obtain the so called {\em proximal algorithm} \cite{BaCo11,BoVa04,Le89}:
\begin{align*}
 z^0 & = u_{0,h} , \\
 z^{n+1}  & =\argmin\, (\E^H(w)+\frac{1}{2h}\Vert w-z^n\Vert_{L^2(\Gamma )}^2) \\
  & = \argmin\, \inf_{[u]=w}(\E(u)+\frac{1}{2h}\Vert [u]-z^n\Vert_{L^2(\Gamma )}^2), 
\end{align*}
where in the last inequality we have used an identification of $\E^H$ from \cite[Corollary 2.9]{ChHaKe14}. Thus, instead of solving a minimisation problem for the energy $\E^H$, which is difficult to identify or to handle in practical situations, we solve the modified proximal algorithm
\begin{equation}\label{eq:var0}
\begin{split}
 z^0 & = u_{0,h} , \\
\hat z^{n+1} & = \argmin\, (\E(u)+\frac{1}{2h}\Vert [u]-z^n\Vert_{L^2 (\Gamma )}^2), \\
 z^{n+1} & = [ \hat{z}^{n+1} ],
\end{split}
\end{equation}
where now the minimisation is performed for the energy $\E$ in the reference energy space $H^1 (\Omega_i \cup\Omega_e )$ (the effective domain of $\E$). This energy is explicitly given, but we have to pay a price by passing from a minimisation problem in the space $L^2 (\Gamma )$ to a minimisation problem in the reference energy space $H^1 (\Omega_i \cup\Omega_e )$, that is, from a function space over $\Gamma$ to a function space over $\Omega_i\cup\Omega_e$, which adds one space dimension in the domain. However, at the same time, the structure of the problem \eqref{eq:mod1}, which couples a parabolic equation on $\Gamma$ with an elliptic equation in $\Omega_i\cup\Omega_e$, suggests that it is necessary to pass through $\Omega_i\cup\Omega_e$ anyhow.

\begin{remark}
In the case of the example considered below, it is possible to express the problem \eqref{eq:mod1} on the manifold $\Gamma$ by
\begin{equation} \label{eq:absform}
\dot{U}+\Lambda_\sigma U + {\mathbf S}(U)=0,\quad U(0)=U_0,
\end{equation}
with $U=(u_i|_{\Gamma},u_e|_{\Gamma})$, 
\[
\Lambda_\sigma=\begin{pmatrix}
\Lambda_{\sigma_i} & 0 \\
0 & \Lambda_{\sigma_e} 
\end{pmatrix}\quad \text{ and } \quad \mathbf{S}(U)=\begin{pmatrix}
s(u_i-u_e) \\
-s(u_i-u_e)
\end{pmatrix} ,
\]
where $\Lambda_{\sigma_i}$ and $\Lambda_{\sigma_e}$ denote appropriate Dirichlet-to-Neumann operators on $\Gamma$. When the geometry is simple (that is, for example, when $\Omega_i$ and $\Omega_i\cup\Omega_e\cup\Gamma$ are concentric balls) and when the diffusion coefficients $\sigma_i$ and $\sigma_e$ are constant, these operators are easy to compute (the first one admits in fact an explicit representation \cite[Section 36.2]{La02}), and one might solve 
the gradient system directly on $\Gamma$. However, such geometries seem not realistic for cells and biological tissues. That is why we prefer to have a more general approach for solving problem \eqref{eq:mod1}.
\end{remark}

The existence and uniqueness for the problem \eqref{eq:var0} is well known, at least if the time step $h$ is small enough ($h<\frac{1}{L}$ is sufficient, where $L$ is the Lipschitz constant of $s$), and the sequence $(z^n)_n$ is then well defined. Note that the variational Euler-Lagrange equation corresponding to \eqref{eq:var0} is
\begin{equation}\label{eq:var1}
\int_{\Omega_i\cup\Omega_e} \sigma \nabla\hz^{n+1} \nabla v +\int_\Gamma s([\hz^{n+1}])\,\jump{v}\,d\sigma+\int_\Gamma\,\frac{1}{h}([\hz^{n+1}]-z^n)\,\jump{v}\,d\sigma=0.
\end{equation}
Thus the algorithm reads as follows:
\begin{itemize}
\item[-]  Choose $z^0$ ($=u_{0,h}$), an approximation of the exact initial value $u_0$.
\item[-]  Given $z^n\in H^\frac12 (\Gamma )$, compute ${\hat z}^{n+1}$, solution of \eqref{eq:var0} or, equivalently, \eqref{eq:var1}.
\item[-]  Set $z^{n+1} := [\hat z^{n+1}]$.
\end{itemize}
Note that $z^0$ is any element in the closure of $j(V) = H^\frac12 (\Gamma )$, that is, $z^0 \in L^2 (\Gamma )$, and after one iteration $(z^n)_n$ remains in $H^\frac12 (\Gamma )$,  the effective domain of $\E^H$. We emphasize that the gradient structure of the system \eqref{eq:var0} allows one to use any optimization method to solve the minimisation step. However, since the reference energy space $H^1(\Omega_i\cup\Omega_e)$ contains functions with a jump on the manifold $\Gamma$, a natural approach might be based on an alternating algorithm of minimisation in the sub-domains. More precisely,  the method consists of a non overlapping Schwarz algorithm to solve the problem \eqref{eq:var1}. For each time step $n$, and given $z^n$, we denote $z^{n+1}$ by 
$u_i^{n+1}-u_e^{n+1}$ and we drop the index $n+1$ for simplicity. Then, we compute a sequence $(u^k)_k$
in the following way: given $u^k$, we solve
\begin{equation}\label{eq:sch1}
\begin{cases}
-\Delta {u}_i^{k+1}=0  &  \text{in } \Omega_i , \\[2mm]
\frac{{u}_i^{k+1}-{u}_e^{k}-z^n}{h}+s(({u}_i^{k+1}-{u}_e^{k}))+\sigma_i \, \partial_{n_i} {u}_i^{k+1}=0 & \text{on } \Gamma , \\[2mm]
 u_i = g_i & \text{on } \partial\Omega_i \setminus \Gamma 
\end{cases}
\end{equation}
\begin{equation}\label{eq:sch2}
\begin{cases}
-\Delta {u}_e^{k+1}=0  & \text{in } \Omega_e , \\[2mm]
\frac{{u}_i^{k^*}-{u}_e^{k+1}-z^n}{h}+s(({u}_i^{k^*}-{u}_e^{k+1}))+ \sigma_e\, \partial_{n_e} {u}_e^{k+1}=0 & \text{on } \Gamma , \\[2mm]
u_e^{k+1}=g_e & \text{on } \partial\Omega_e\setminus\Gamma ,
\end{cases}
\end{equation}
with $k^*=k$ or $k^*=k+1$.
The existence  of solutions for the sub-problems \eqref{eq:sch1}-\eqref{eq:sch2} follows from the assumptions on $s$ and the condition $h<\frac{1}{L}$. The convergence of the Schwarz algorithm with nonlinear transmission conditions is not obvious and is beyond the scope of this paper.  We emphasize that several choices on the coupling terms on $\Gamma$ are possible, for example nonlinear coupling terms which are both implicit for the interior and the exterior domain, nonlinear coupling terms which are implicit in one of the domains, and nonlinear coupling terms which are both explicit for the interior and the exterior domain.

\subsection*{A remark on a linear version of the algorithm} 
A  variant of the Schwarz algorithm consists in linearizing the transmission conditions. For this, we set $s(\left[ u\right])=a(\left[ u\right])\left[ u\right]$ (in particular, we assume $s(0) = 0$, which is a reasonable assumption). Then we can rewrite the internal sub-problem \eqref{eq:sch1} as
\begin{equation}\label{eq:sch1l}
\begin{cases}
-\Delta {u}_i^{k+1}=0,  & \text{in } \Omega_i , \\[2mm]
\frac{{u}_i^{k+1}-{u}_e^{k}-z^n}{h}+a({u}_i^{k}-{u}_e^{k})({u}_i^{k+1}-{u}_e^{k})+\sigma_i\,\partial_{n_i} {u}_i^{k+1}=0 & \text{on } \Gamma , \\[2mm]
u_i^{k+1} = g_i & \text{on } \partial\Omega_i\setminus\Gamma .
\end{cases}
\end{equation} 
If we set $a_k:=a(\left[ {u}^k\right]) = a({u}_i^{k}-{u}_e^{k})$ and 
\[
B_{k}(u)= (\frac{1}{h}+a_k)\,u ,
\]
then we may rewrite the nonlinear Schwarz algorithm as a linear implicit method of the form 
\begin{equation}\label{eq:sch1ll}
\begin{cases}
-\Delta {u}_i^{k+1}=0  & \text{in } \Omega_i ,\\[2mm]
B_{k}({u}_i^{k+1}) + \sigma_i\, \partial_{n_i} u_i^{k+1} = B_{k} ({u}_e^k ) + \frac{z^n}{h} & \text{on } \Gamma , \\
u_i^{k+1} = g_i & \text{on } \partial\Omega_i\setminus\Gamma ,  
\end{cases}
\end{equation}
\begin{equation}\label{eq:sch2ll}
\begin{cases}
-\Delta {u}_e^{k+1}=0  & \text{in } \Omega_e ,\\[2mm]
B_{k}({u}_e^{k+1}) + \sigma_e\, \partial_{n_e} u_e^{k+1} = B_{k} ({u}_i^k ) + \frac{z^n}{h} & \text{on } \Gamma  , \\[2mm]
{u}_e^{k+1}=g_e & \text{on } \partial\Omega_e \setminus\Gamma .\\
\end{cases}
\end{equation}
For $\epsilon>0$, we set this Schwarz method under the form of the  linear transmission Robin condition
\begin{equation}\label{eq:sch1lleps}
\begin{cases}
-\Delta {u}_i^{\epsilon,k+1}=0  & \text{in } \Omega_i,\\[2mm]
B_{k}({u}_i^{\epsilon,k+1}) + \sigma_i\, \partial_{n_i} u_i^{k+1} = B_{k} ({u}_e^{\epsilon,k}) + \epsilon \partial_{n_e} u_e^{\epsilon,k}& \text{on } \Gamma, \\
 u_i^{k+1} = g_i & \text{on } \partial\Omega_i\setminus\Gamma ,  
\end{cases}
\end{equation}
\begin{equation}\label{eq:sch2lleps}
\begin{cases}
-\Delta {u}_e^{\epsilon,k+1}=0  & \text{in } \Omega_e ,\\[2mm]
B_{k}({u}_e^{\epsilon,k+1}) + \sigma_e\, \partial_{n_e} u_e^{\epsilon,k+1} = B_{k} ({u}_i^{\epsilon,k} ) + \epsilon \partial_{n_i} u_i^{\epsilon,k} & \text{on } \Gamma  , \\[2mm]
{u}_e^{\epsilon,k+1}=g_e & \text{on } \partial\Omega_e\setminus\Gamma .
\end{cases}
\end{equation}
Note that this is a slight generalization of the Schwarz method considered in \cite[Theorem 1, and the section V]{Li88I} and the convergence of this algorithm may be obtained following the same lines. In particular, for general geometries 
and domain decompositions, or for non-convex energies $\E$, the linearization of the algorithm might be suitable.

\begin{remark}
 The Schwarz method is not the unique possible choice to solve problem \eqref{eq:var1}, but it is a quite natural approach. In fact, for many classical problems (for example, domain decomposition), the Schwarz method is an elegant approach, although it may have some shortcomings such as expansive cost or slow convergence. When it is used with the state-of-the-art scientific computing methods (parallel programming, preconditioning), it becomes a very attractive tool \cite{GlPaPe95}. For the problem considered here, it is feasible even for more than one cell, for example, a network of cells.
 \end{remark}

\section{Numerics} \label{sec.numerics}

In this section we consider three examples to test our approach. We emphasize that our numerical simulations are presented as a proof of the concept rather than the results of an optimized computing code for solving general problems of j-gradient type. In particular, we do not choose the physical parameters for the model of electropermeabilisation and do not try to make any comparison with existing models. The computations are done on a laptop mac-pro i5 (2.5 GHz) with the open source software FreeFem++ \cite{He12}. We use the nonlinear algorithm and the Schwarz iterations are performed with the nonlinear optimization library IPopt \cite{WaBi06}. The first example treats a simple geometry of the cell and linear transmission conditions on $\Gamma$ where actually an analytic solution is available (see \cite{KLPW14}); we may thus compare the analytical and the numerical solution. The second and the third examples treat more complex transmission conditions at the membrane $\Gamma$, namely a nonlinear, monotone transmission law proposed by Kavian et al. \cite{KLPW14}, and one condition of a double well type. The two nonlinearities are of a rather different nature and might serve as representatives of various other transmission conditions. We recall that in our approach several generalizations are possible and we end up this Section with some non trivial geometries.

\subsection{Example 1} 

In our first example we let $0<R_1<R_2$ and put $\Omega_i := B(0,R_1)$, $\Omega_e := B(0,R_2)\setminus \overline{B(0,R_1)}$, and $\Gamma := \partial B(0,R_1)$, that is, $\Omega_i$ is the disk of radius $R_1$, $\Omega_e$ is a concentric annulus with radii $R_1$ and $R_2$, and $\Gamma$ is the circle of radius $R_1$. We assume given two constant conductivities, $\sigma_i$ in $\Omega_i$ and $\sigma_e$ in $\Omega_e$, respectively, and a Dirichlet boundary condition $g=E\,R_2\cos(\theta)$ on $\partial B(0,R_2)$, where $E$ is a given constant electrical field intensity. The function $s$ is assumed to be linear, that is, $s(\lambda)=S_L\cdot \lambda$, where $S_L$ is a constant. An explicit solution for these data is given in \cite{KLPW14} in polar coordinates, namely
\begin{align*}
& u(r,\theta)=(\alpha_e\,r+b_er^{-1})\cos(\theta) && \text{ for } (r,\theta)\in\left[ R_1,R_2\right]\times\left[ 0,2\pi\right] , \text{ and} \\
& u(r,\theta)=\alpha_i\,r\cos(\theta) && \text{ for } (r,\theta)\in\left[ 0,R_1\right]\times\left[ 0,2\pi\right] ,
\end{align*}
where, if we set $A=\frac{1}{2}(\frac{\sigma_i}{S_LR_1}+1+\frac{\sigma_i}{\sigma_e})$ and $B=(\frac{\sigma_i}{S_LR_1}+1-\frac{\sigma_i}{\sigma_e})$,
\[
\alpha_e=A\alpha_i,\quad  \beta_e=B\alpha_iR_1^2,\quad \alpha_i=\frac{E}{(A+B(\frac{R_1}{R_2})^2)}.
\]
For the simulation we take $\sigma_i=\sigma_e=1$ and $R_1=1$, $R_2=2$. 

In Figure \ref{fig1}, we plot the convergence curve of the $L^2$-error of the solution at the final time $T=1$, as a function of the space discretization parameter $h_x$ in a log log scale and a fixed time step $h=0.1$. 

\begin{figure}[h!]\label{fig1}
\begin{center}
{\includegraphics[width = 0.7\textwidth]{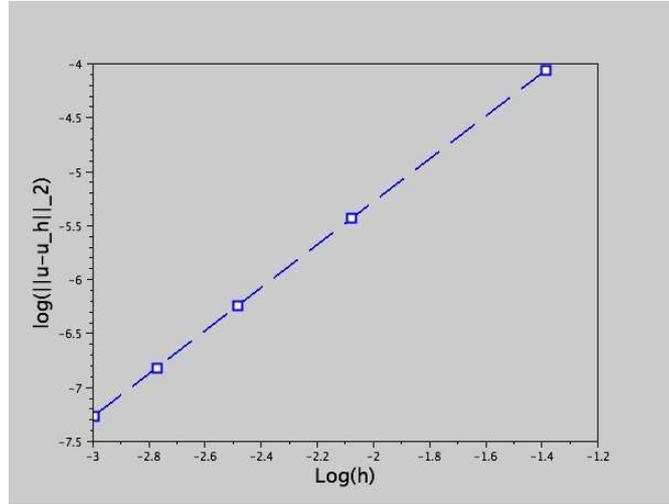}}
\end{center}
\caption{Convergence curve for the $L^2$ error in log log scale. $S_L=10^8$, rate of convergence $1.97$}
\end{figure}

We note that the algorithm converges very quickly in this example and the solution is accurately computed, as formally expected from theoretical considerations. This supports that the Schwarz method should converge even with nonlinear transmission conditions. Moreover, as we work in a variational setting, we may consider more general geometries and boundary conditions without supplementary efforts. 

Note that the convergence rate, in this example,  decreases with $S_L$ whatever the mesh size $h_x$ is and for a fixed time step $h$. This might be justified by the fact that when $S_L$ decreases, the solution becomes more singular. In addition,  for smaller $S_L$, the time step should be chosen small, too, to ensure the coerciveness of the energy.
 
\begin{remark}
For the electropermeabilisation problem, the dynamical transmission condition is
\[
C_m\partial_t {u}+s_m(u)+\sigma_i\d_{n_i} {u}_i=0 \text{ on } \Gamma ,
\]
where $\sigma_i$ is the internal constant conductivity (a typical value is $0.455$ S/m) and $C_m$ is the capacitance (a typical value is $9.5 10^{-3}\ F/m^2$ \cite{NeKr99}). We have not taken exactly these values in this example, because we are only interested in the qualitative behaviour of the system. Nevertheless, the large difference between $S_L$ and $S_R$ allows for an optimal rate of convergence of the algorithm.
\end{remark}

\subsection{Example 2}

In the second example, we choose $\Omega_i$, $\Omega_e$ and $\Gamma$ as in the example 1, and we consider the nonlinear function $s$ to be the derivative of a double well potential with equilibrium points $S_L<S_a<S_R$, that is,
\[
s(t)=-\epsilon^2\,A_m\,(t-S_R)(t-S_a)(t-S_L) , 
\]
with $\epsilon>0$, $A_m\geq 0$. We assume that $S_a< \frac{S_L+S_R}{2}$. This is a particular example for a more general choice of functions satisfying 
\[
s'(S_R)<0,\quad s'(S_a)>0,\quad s'(S_L)<0.
\]
In this example we consider the same boundary conditions as in the first example and a zero initial condition. In Figures \ref{fig2-1}-\ref{fig2-6} we plot the solution $u=(u_i,u_e)$ at times $T=0.5$ and $T=1.0$. The time step is $0.05$ and the mesh size $h_x= 0.07$.
In this example, we have set $S_L=1.9$, $S_R=10^2$, $S_a=10$, $Am=1$ and $\epsilon=10^{-3}$. Note that the colormap is calculated for each single image so that the colormap for the solution in the entire domain $\Omega_i\cup\Omega_e$ does not necessarily appear to be the sum of the colormaps of the two images in each sub-domain (compare, for example, \ref{fig2-4}, \ref{fig2-5}, and \ref{fig2-6}).

\begin{remark}
Note that as we may expect the solutions to be very smooth except in a neighborhood of $\Gamma$, we may use different meshes on 
$\O_i$ and $\O_e$ and refine the meshes close to $\Gamma$ (see example 3).
\end{remark}

\begin{figure}[h!]
\begin{center}
\subfigure[The solution in $\Omega_i$  \label{fig2-1}]
{\includegraphics[width = 0.32\textwidth]{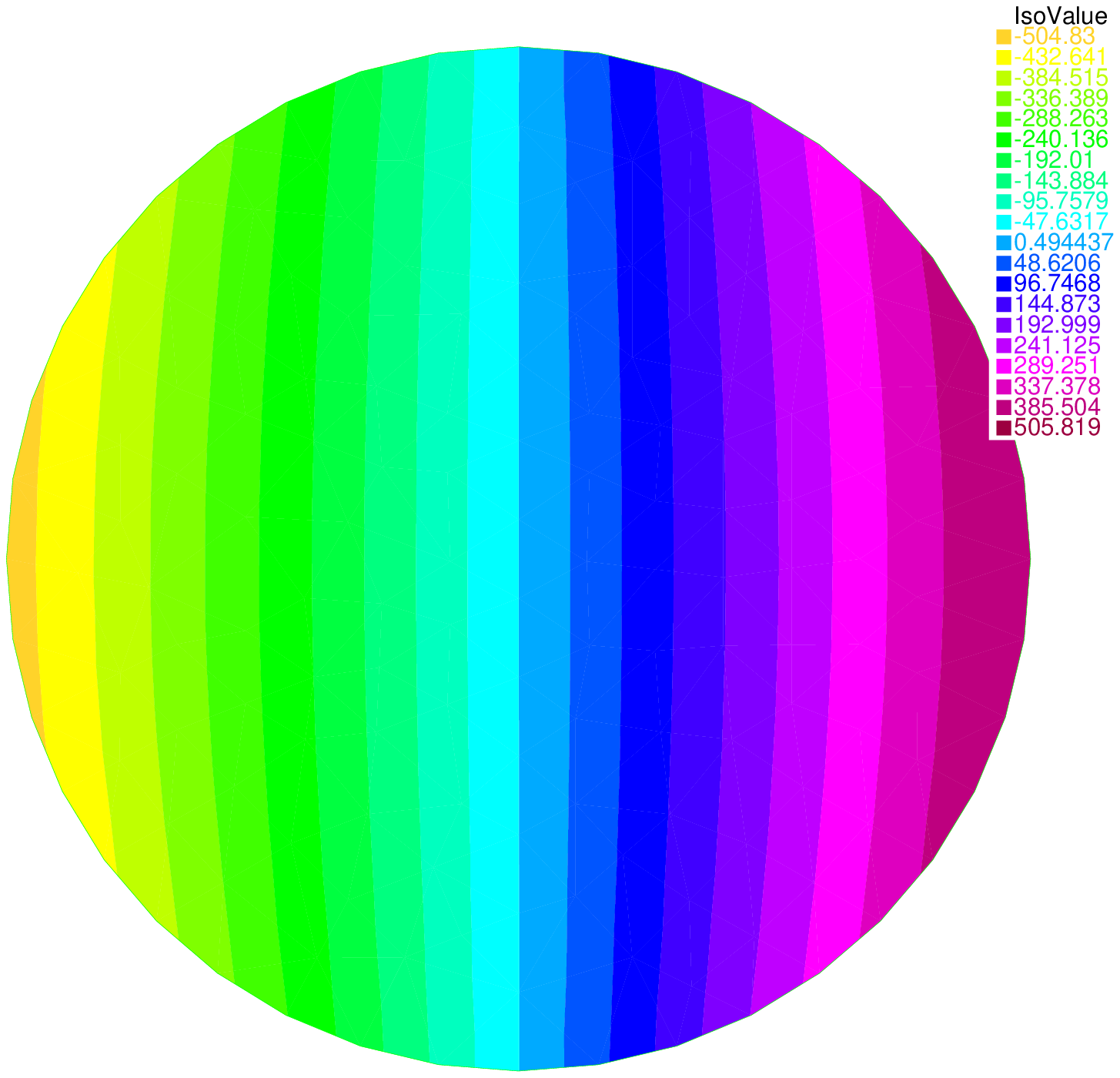}}
\subfigure[The solution in $\Omega_e$ \label{fig2-2}]
{\includegraphics[width = 0.32\textwidth]{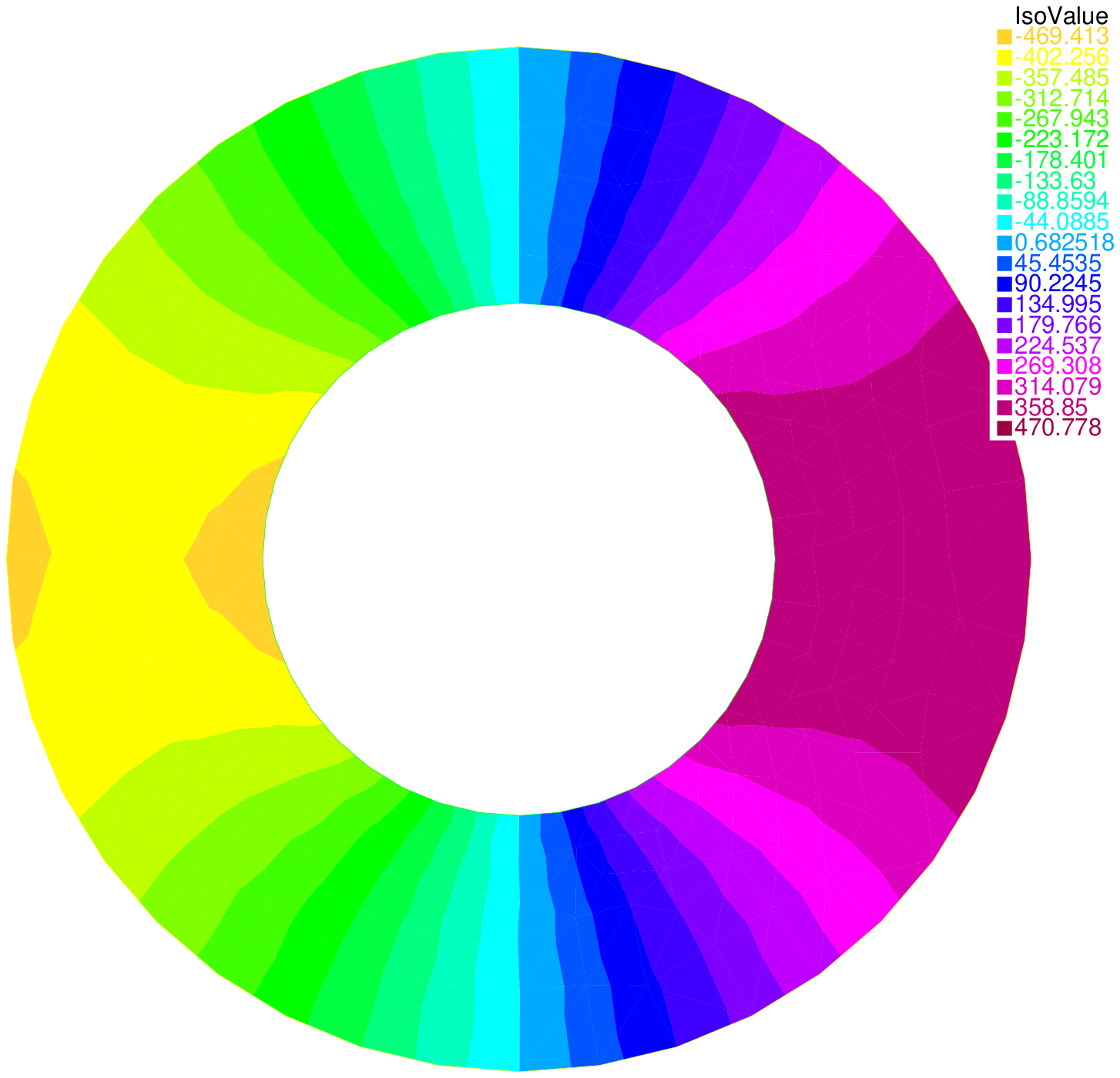}}
\subfigure[Solution in $\Omega$\label{fig2-3}]
{\includegraphics[width = 0.32\textwidth]{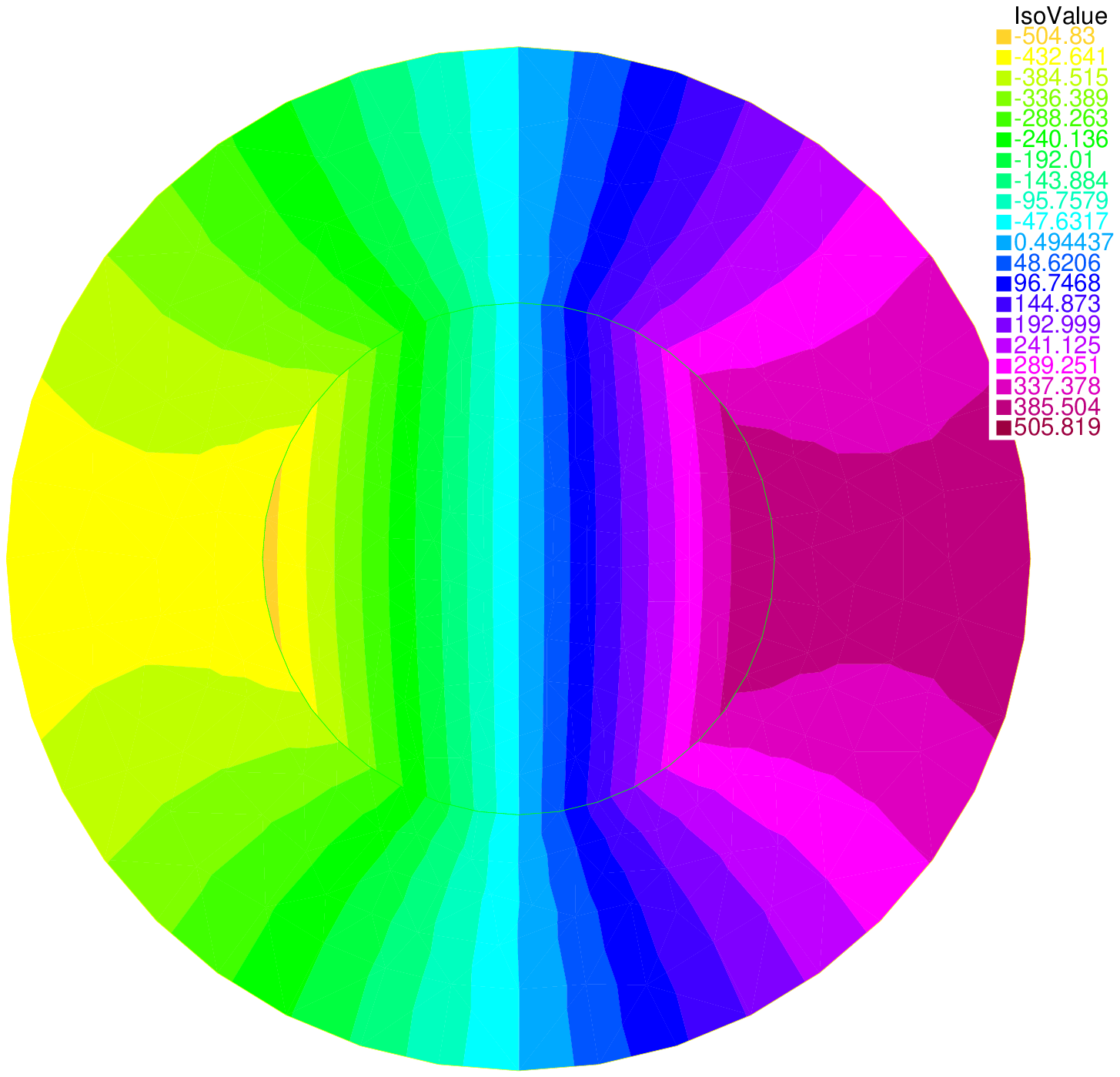}}\\[1mm]
\subfigure[Solution in $\Omega_i$\label{fig2-4}]
{\includegraphics[width = 0.32\textwidth]{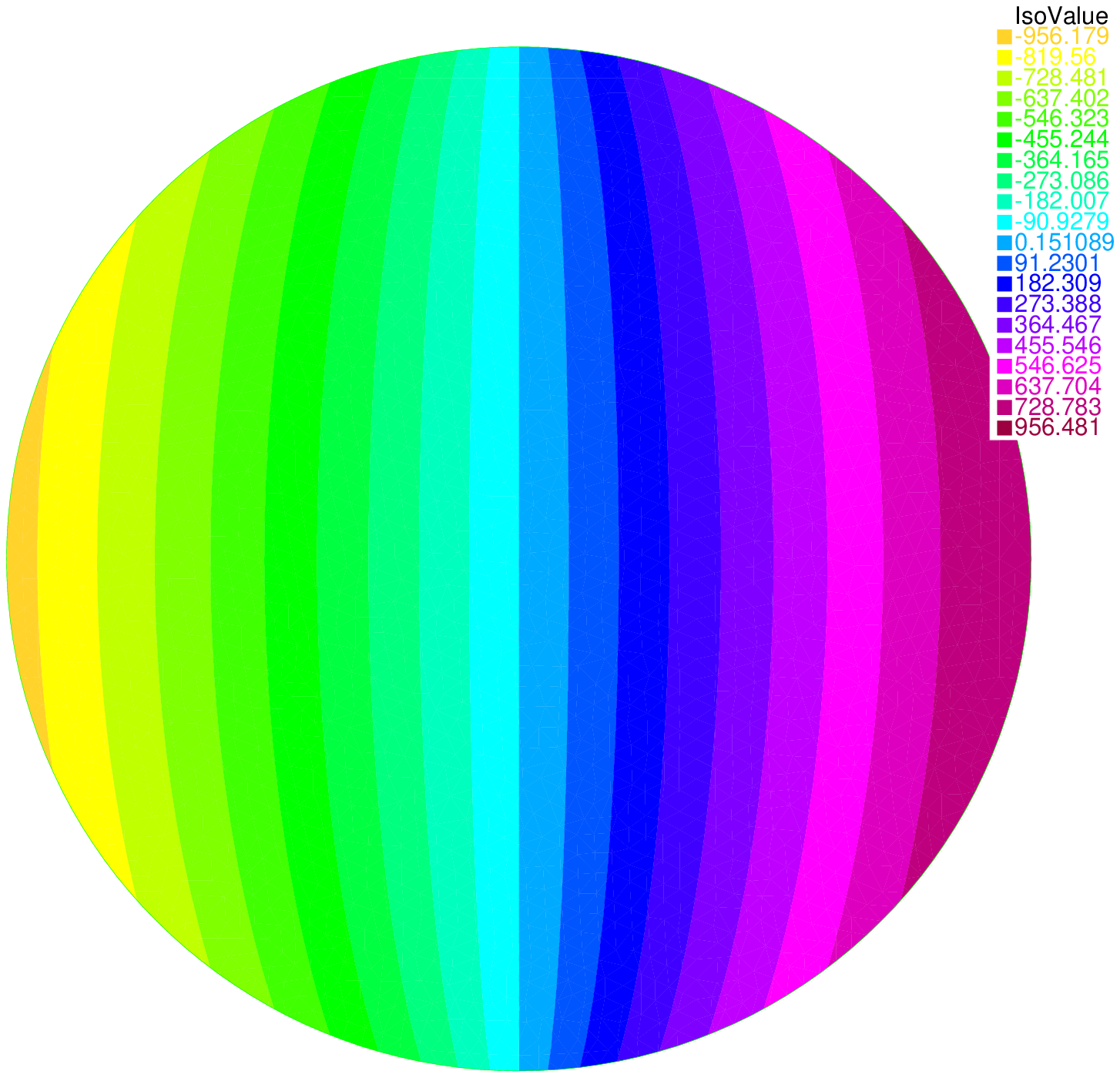}}
\subfigure[ Solution in $\Omega_e$ \label{fig2-5}]
{\includegraphics[width = 0.32\textwidth]{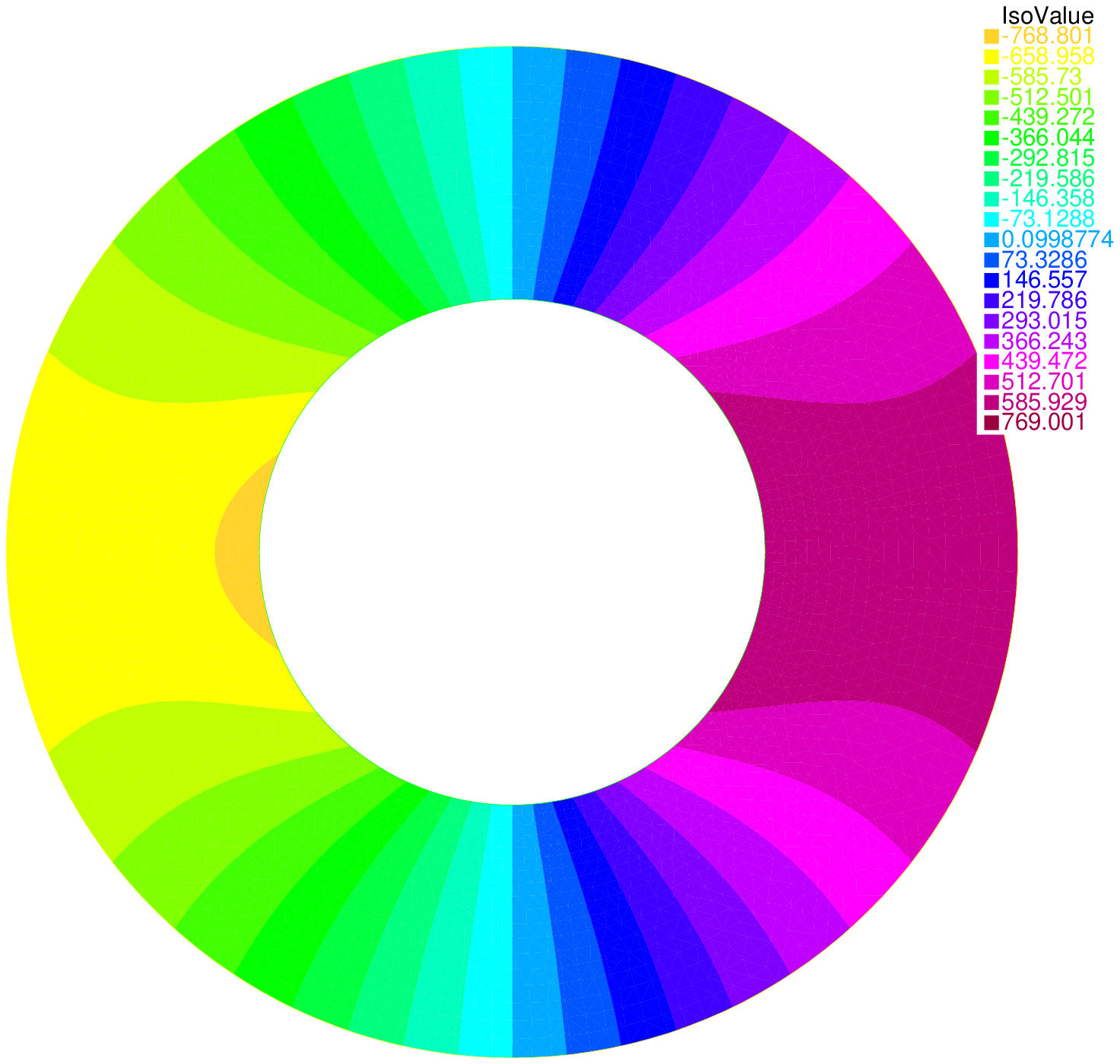}}
\subfigure[Solution in $\Omega$ \label{fig2-6}]
{\includegraphics[width = 0.32\textwidth]{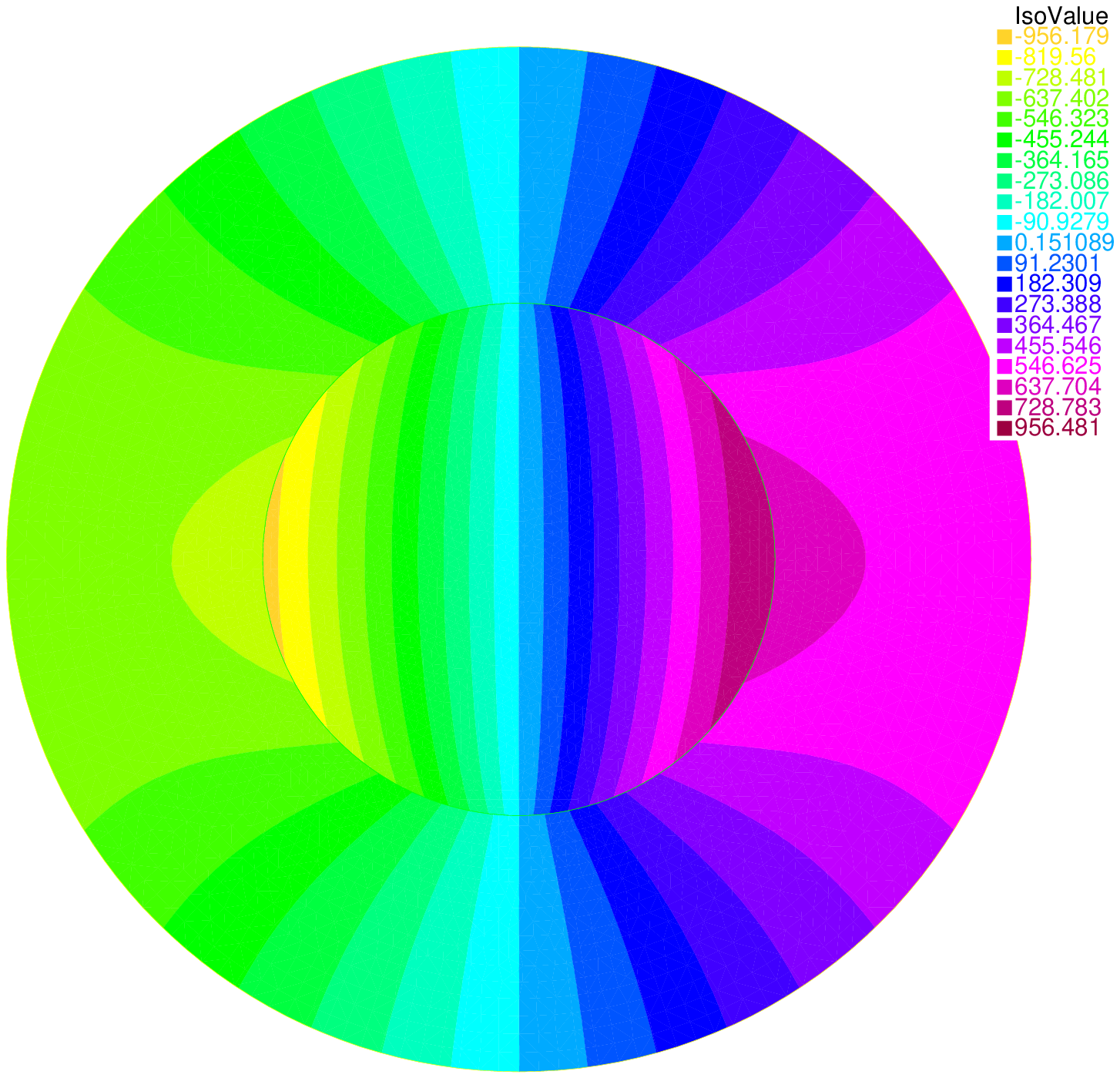}}
\end{center}
\caption{Computed solution $(u_i,u_e)$ at $T=0.5$ and $T=1.$}
\end{figure}

\subsection{Example 3}

In the third example, we take the function $s$ which has  been considered in Kavian et al. \cite{KLPW14}. It is the globally monotone function
\[
s(t)=S_L+\frac{(S_R-S_L)}{2}(1+\tanh (Ke\,(\vert t\vert-Vr))) , 
\]
where $V_r$, $Ke$, $S_L$, $S_R$ are given constants. To make the problem differentiable, we replace $\vert t\vert$ by $\sqrt{t^2+\epsilon^2}$. We consider the same boundary condition on $\d\Omega$  and a zero initial condition, like in example 2. In Figures \ref{fig3}-\ref{fig3-1one}, we plot the solution $u=(u_i,u_e)$ at times $T=0.5$ and $T=1$. The time step is $0.05$ and the 
mesh size $h_x= 0.07$. We take the constants $Ke=10$, $S_L=1.9$, $S_R=10^2$, $Vr=2.9$, and $E=1$.  

\begin{figure}[h!]
\begin{center}
\subfigure[Solution in $\O_i$\label{fig3}]
{\includegraphics[width = 0.32\textwidth]{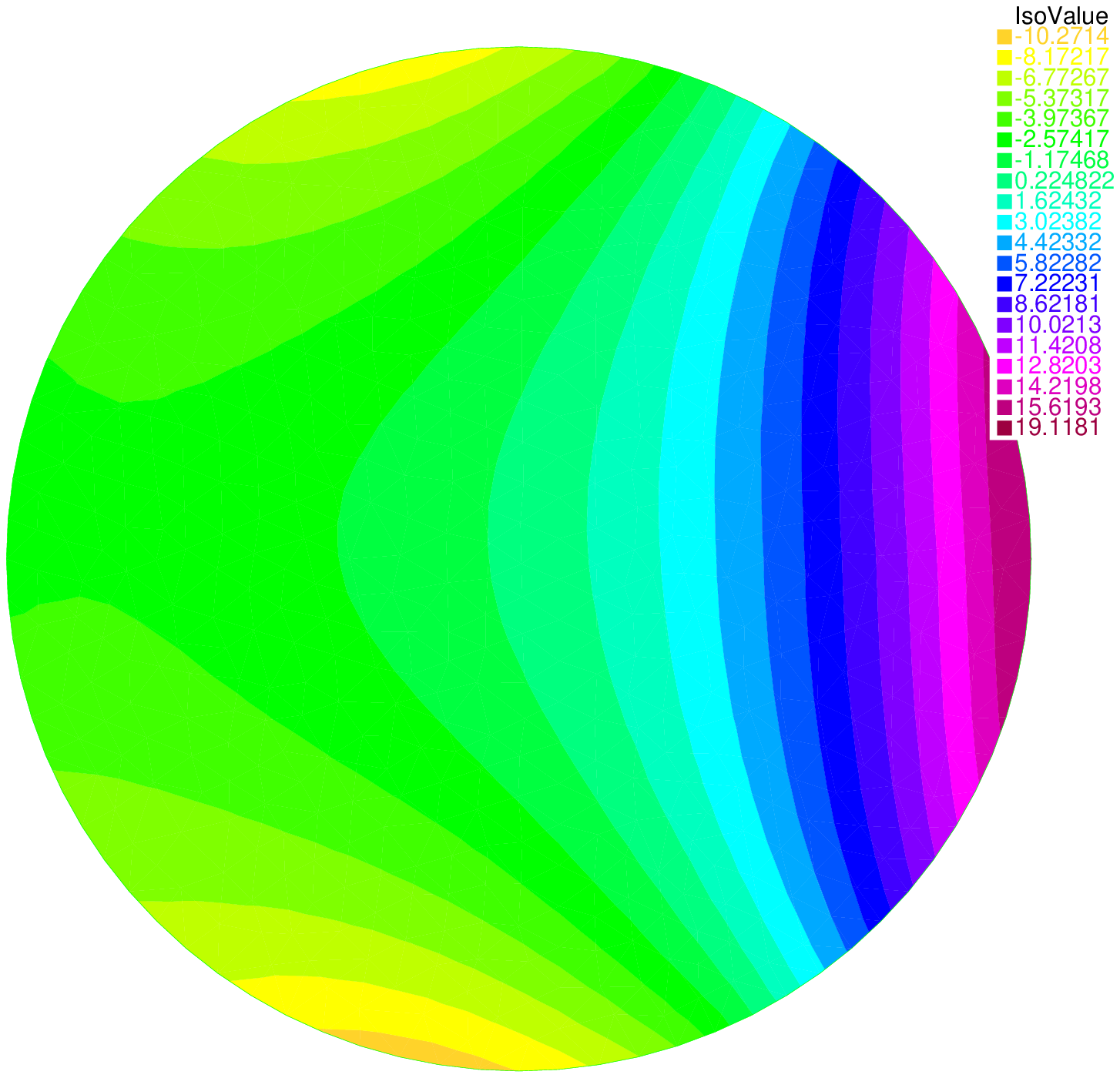}}
\subfigure[Solution in $\O_e$\label{fig3-0}]
{\includegraphics[width = 0.32\textwidth]{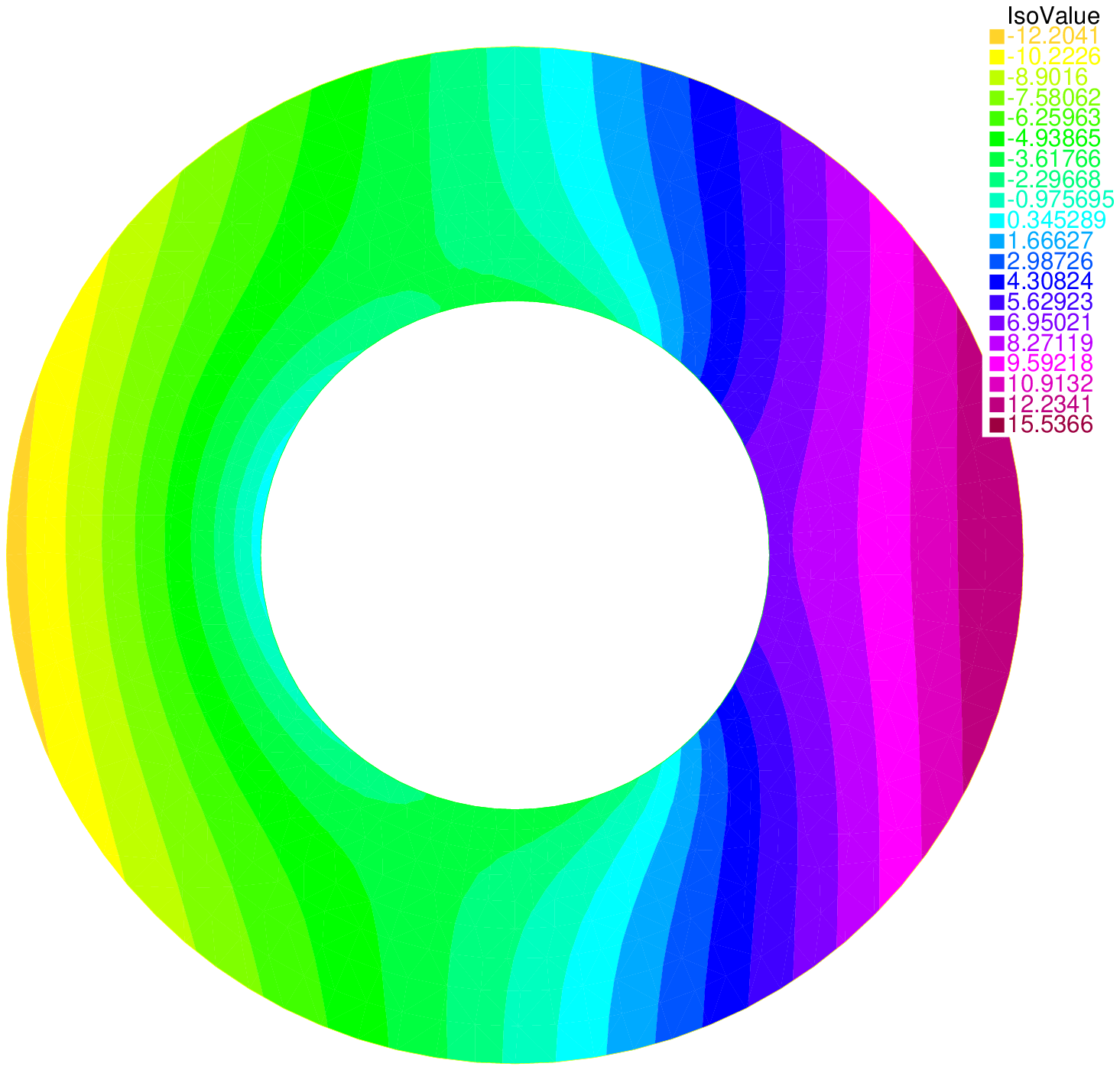}}
\subfigure[Solution in $\O$\label{fig3-1}]
{\includegraphics[width = 0.32\textwidth]{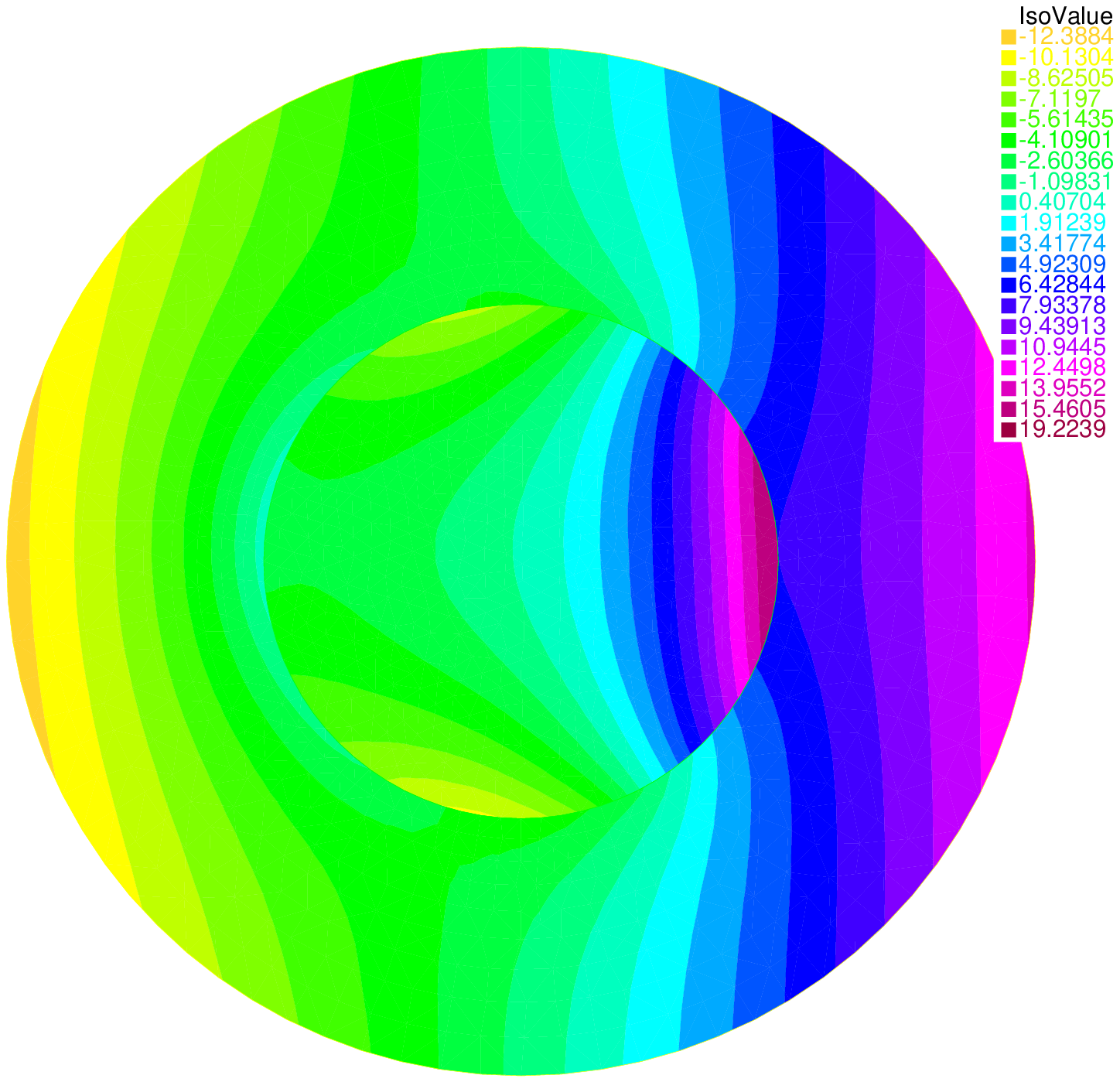}}\\[1mm]
\subfigure[Solution in $\O_i$\label{fig3one}]
{\includegraphics[width = 0.32\textwidth]{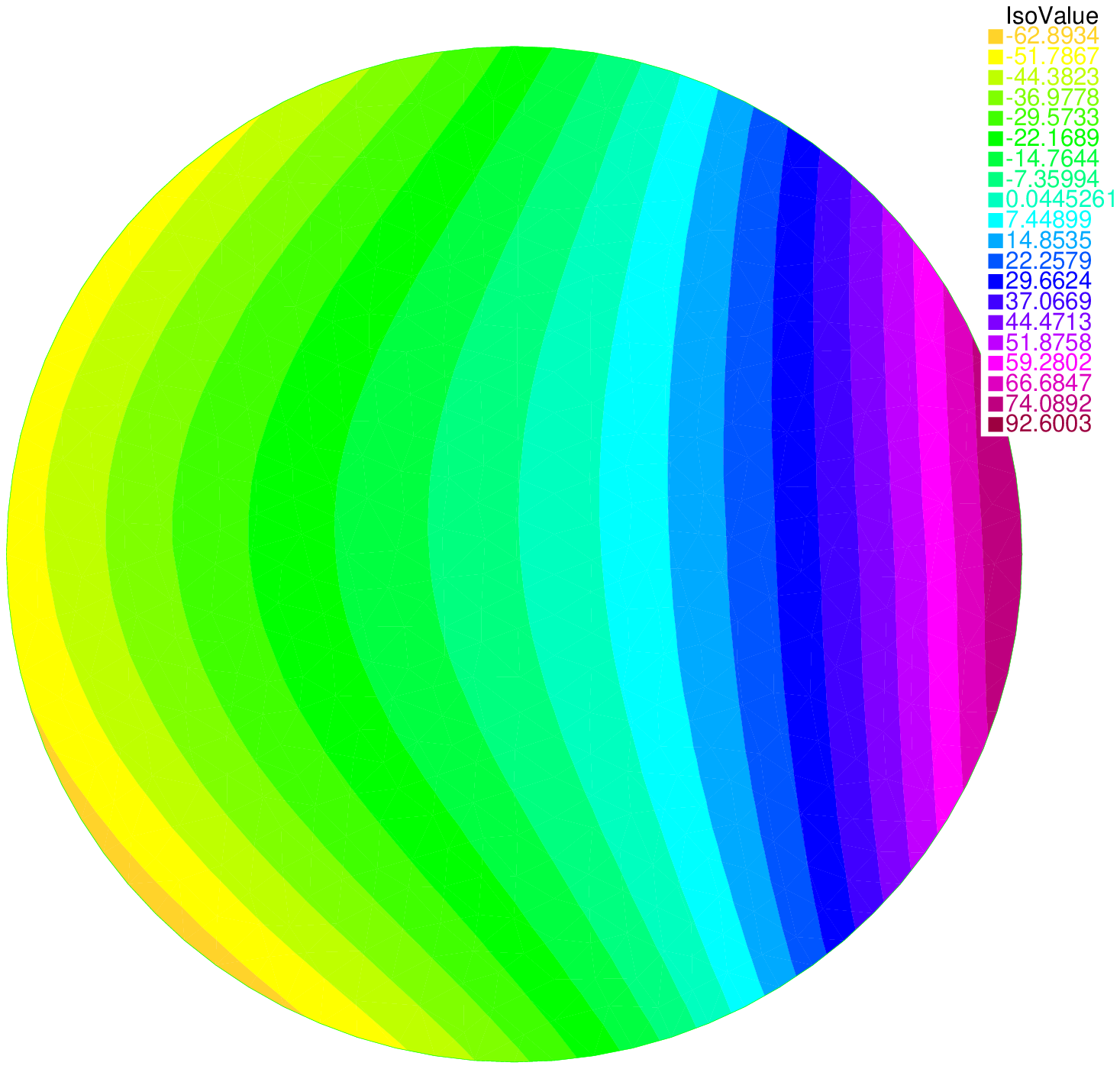}}
\subfigure[Solution in $\O_e$\label{fig3-0one}]
{\includegraphics[width = 0.32\textwidth]{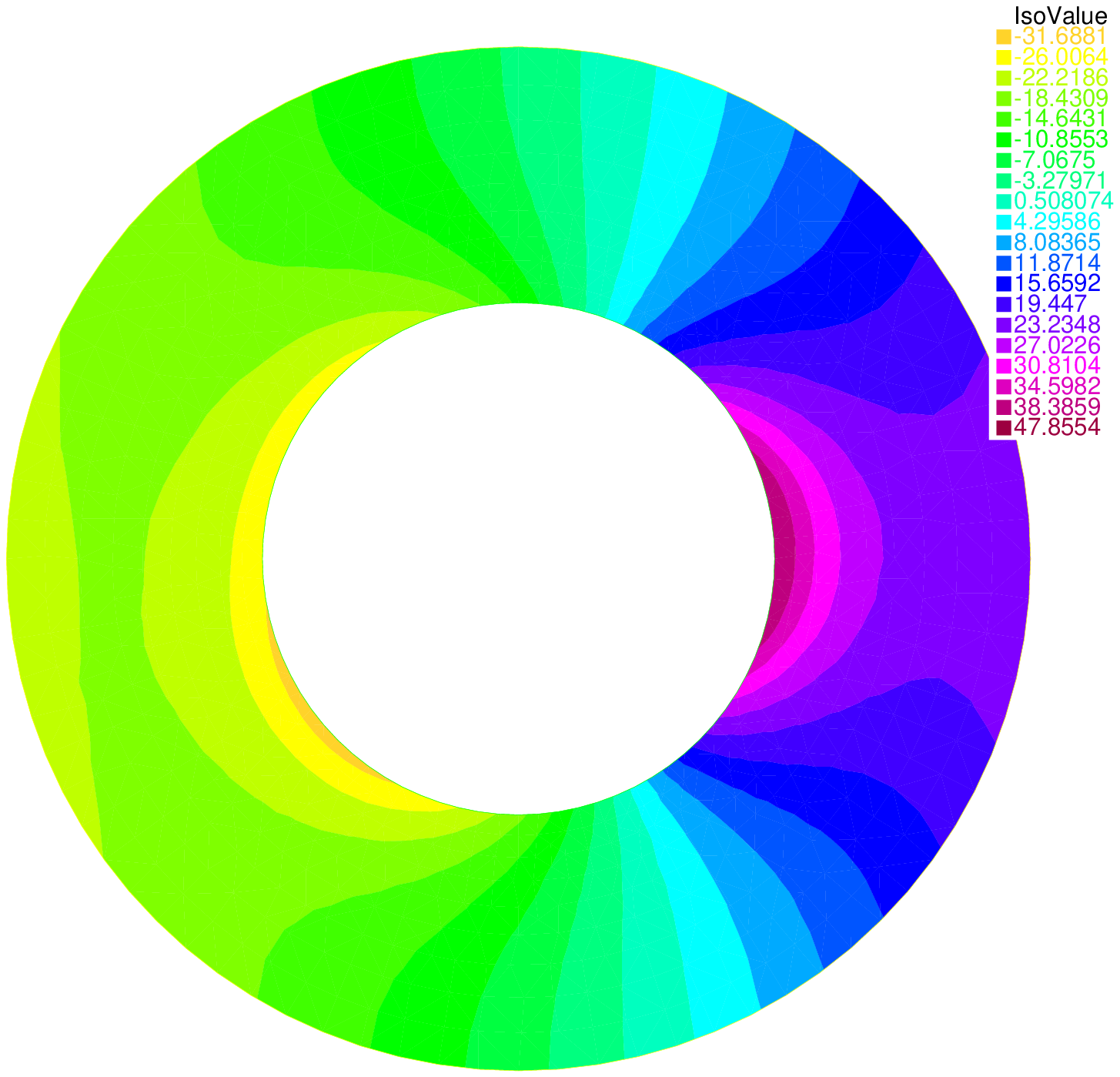}}
\subfigure[Solution in $\O$\label{fig3-1one}]
{\includegraphics[width = 0.32\textwidth]{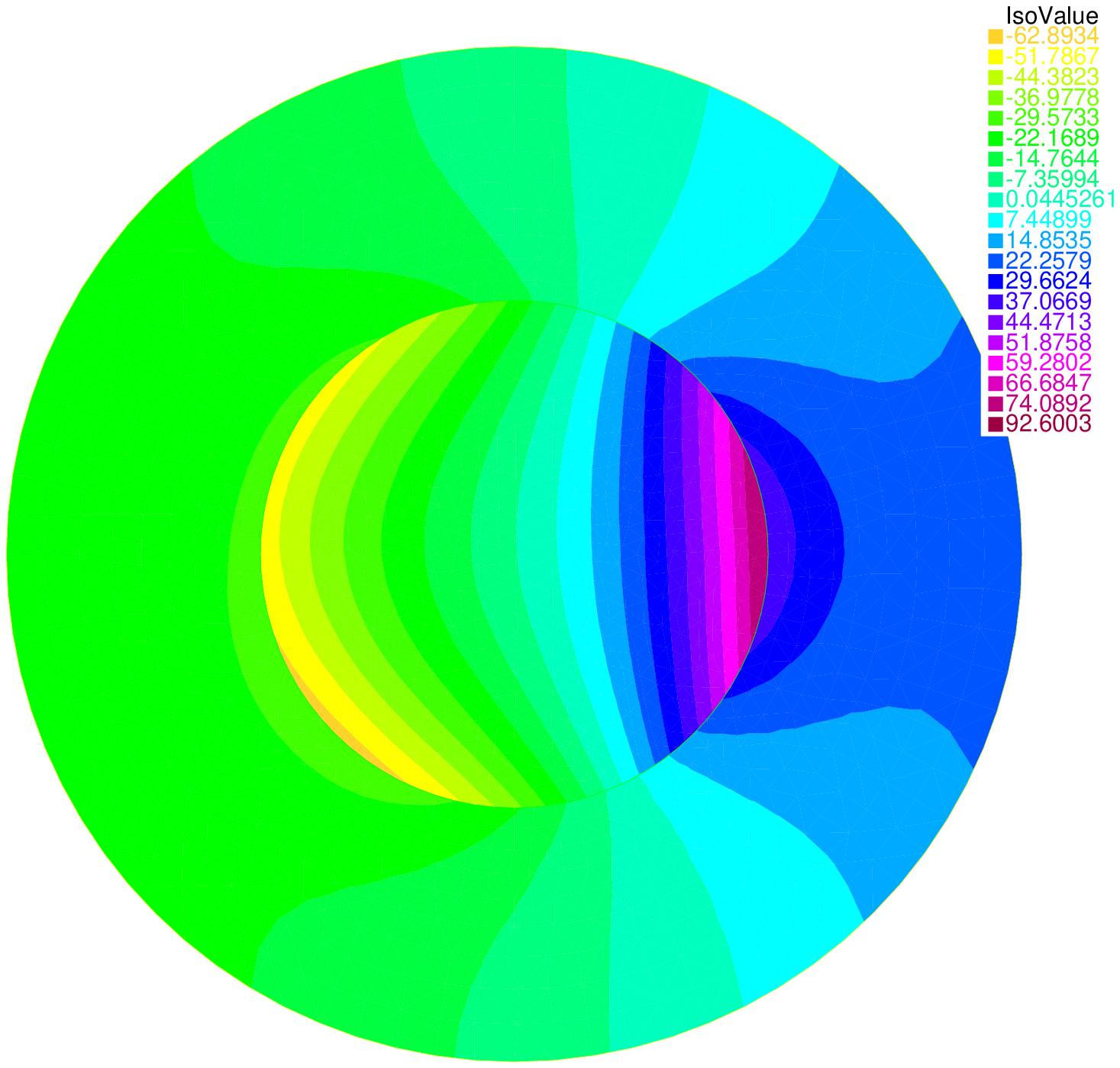}}
\end{center}
\caption{Computed solution $(u_i,u_e)$ at $T=0.5$ and $T=1$}
\end{figure}

\begin{remark}
The numerical results with two different nonlinearities $S$ are in this example quite similar, since the two functions ensure a transmission/transition from the left state characterized by the potential $S_L$ to the right state $S_R$. The main difference is the smoothness of the transition from the left to the right. Note also the role of the constants $V_r$ and $K_e$ on the profile of this transition for the example 3, which has no counterpart in the example 2 even if $\epsilon$ tends to sharpen the profile. We emphasize that our main concern in this article is the possibility of using several kind of nonlinearities, geometries etc. in the framework of the $j$-gradient theory and not to validate any choice of the electropermeabilisation model.
\end{remark}

We end this numerical section by considering the data in a range close to the physical parameters, namely $Ke=10$,$S_L=1.9$, $S_R=10^6$, $Vr=1.5$, and $E=4$. The results are plotted in Figures \ref{fig3b}-\ref{fig3b-2} at time $T=0.5$. One may observe that high and fast variations of the electrical potential are located close to $\Gamma$.
\begin{figure}[h!]

\begin{center}
\subfigure[Solution $u_i$, $T=0.5$ \label{fig3b}]
{\includegraphics[width = 0.32\textwidth]{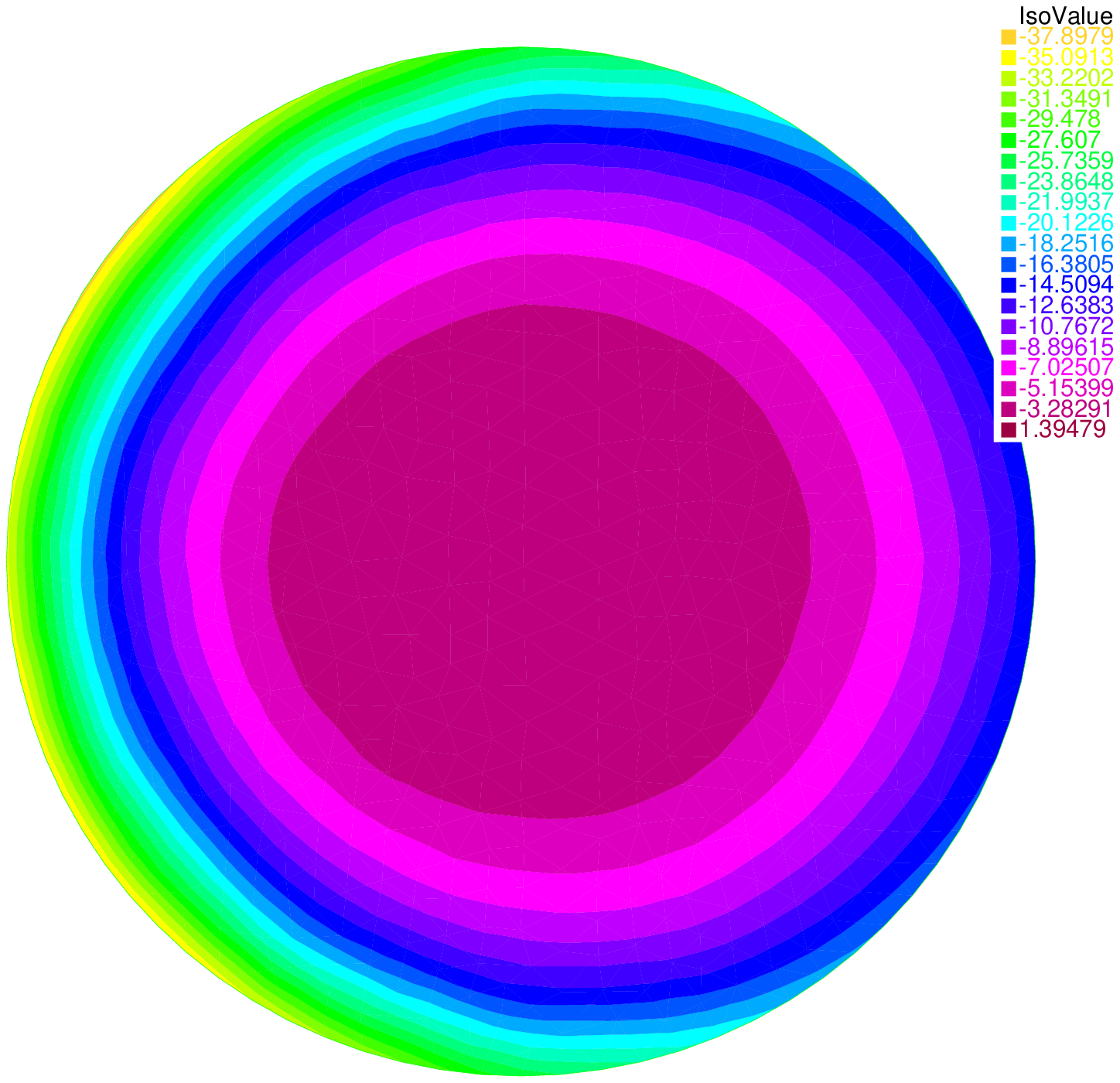}}
\subfigure[Solution $u_e$, $T=0.5$ \label{fig3b-1}]
{\includegraphics[width = 0.32\textwidth]{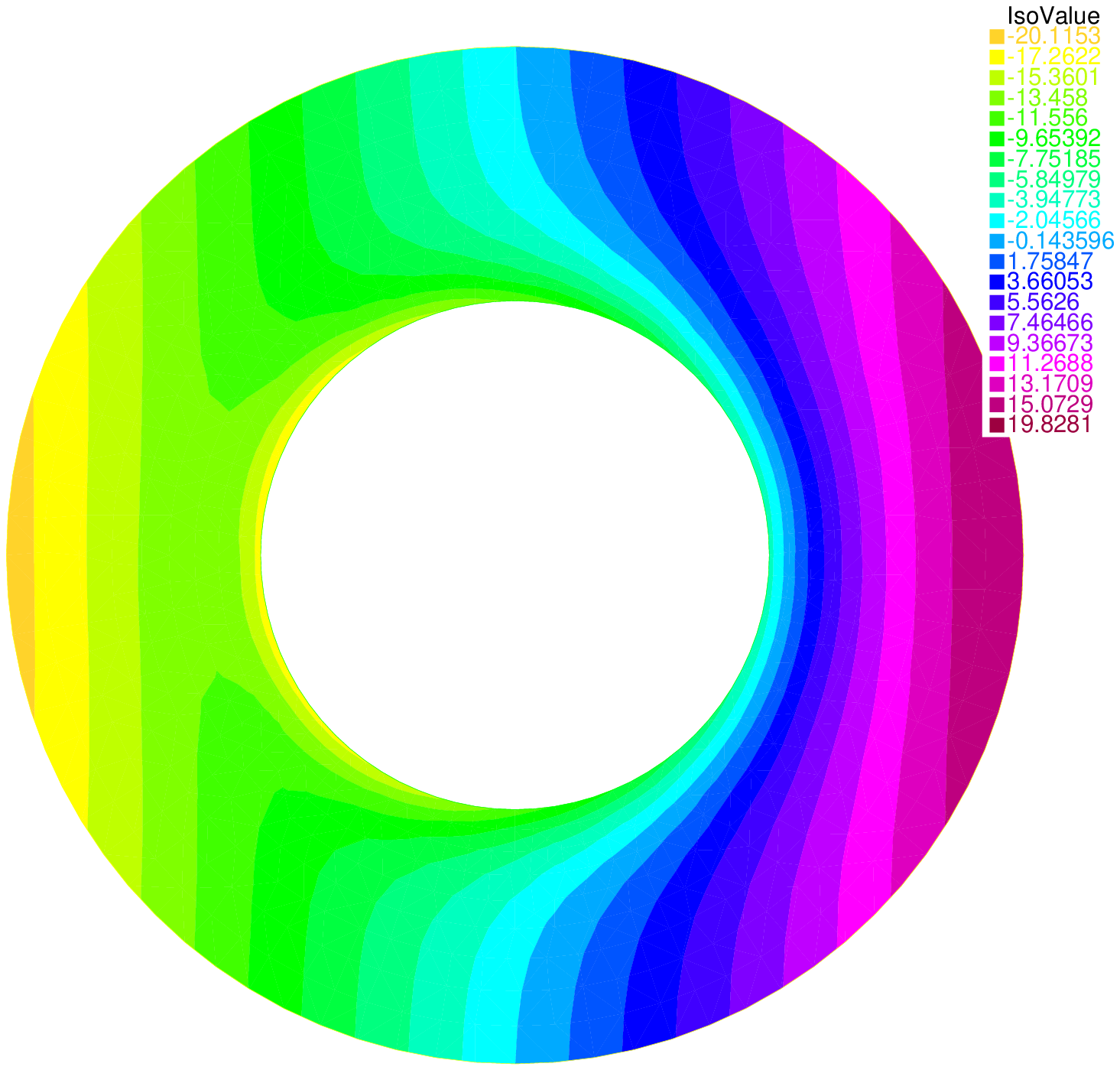}}
\subfigure[Solution $u$, $T=0.5$ \label{fig3b-2}]
{\includegraphics[width = 0.32\textwidth]{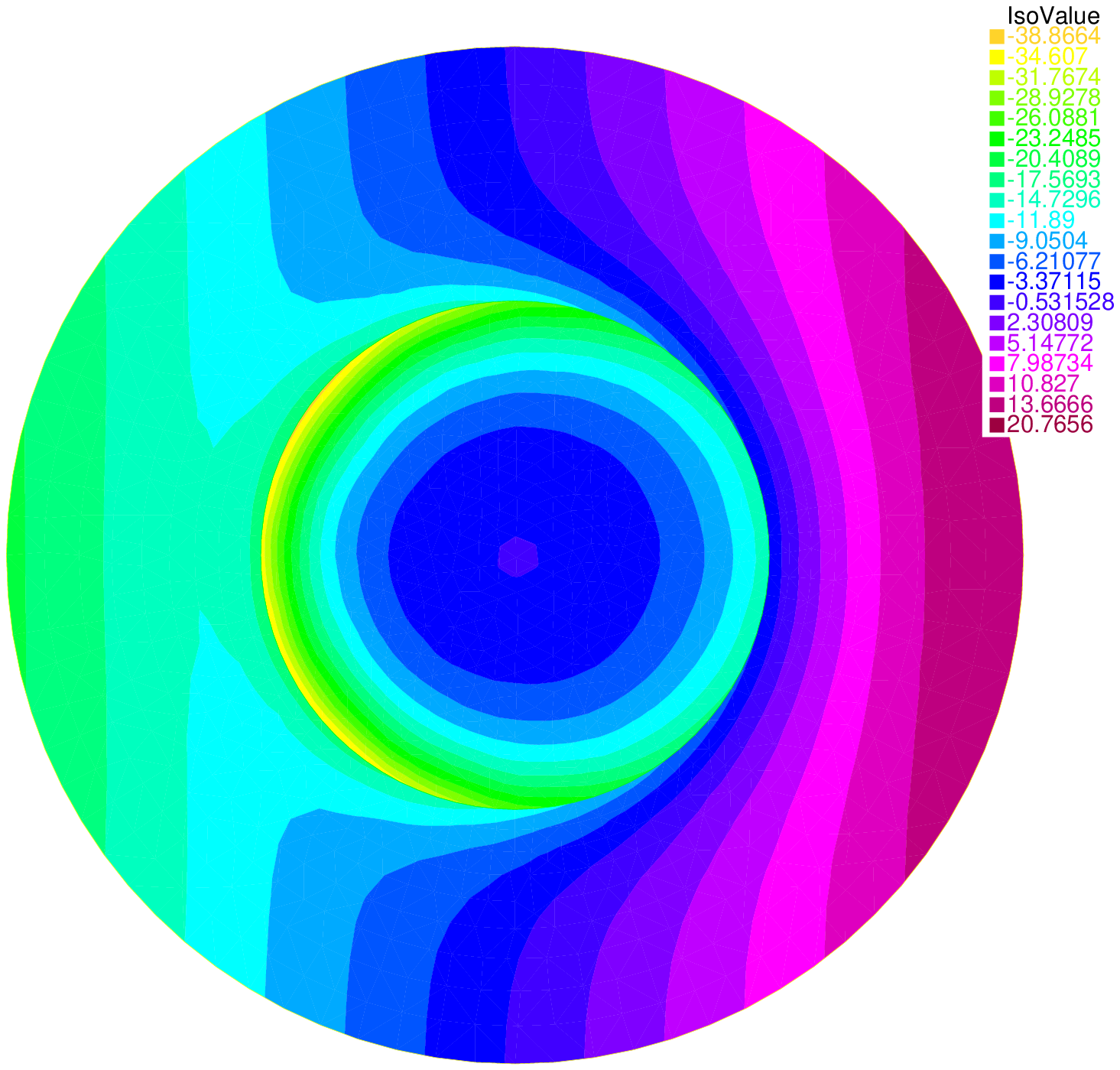}}
\end{center}
\caption{Solution with nearly physical parameters}
\end{figure}

The last results correspond to the example 3 in the sense that we take the same nonlinearity $s$, but for different geometries. We have set $Ke=10$, $S_L=1.9$, $S_R=10^3$, $Vr=2.01$, and $E=1$. It is interesting to note how the shape of $\Gamma$ changes the solution, namely both the profile and the magnitude.

\begin{figure}[h!]

\begin{center}
\subfigure[Cassini egg mesh\label{fig4-1}]
{\includegraphics[width = 0.35\textwidth]{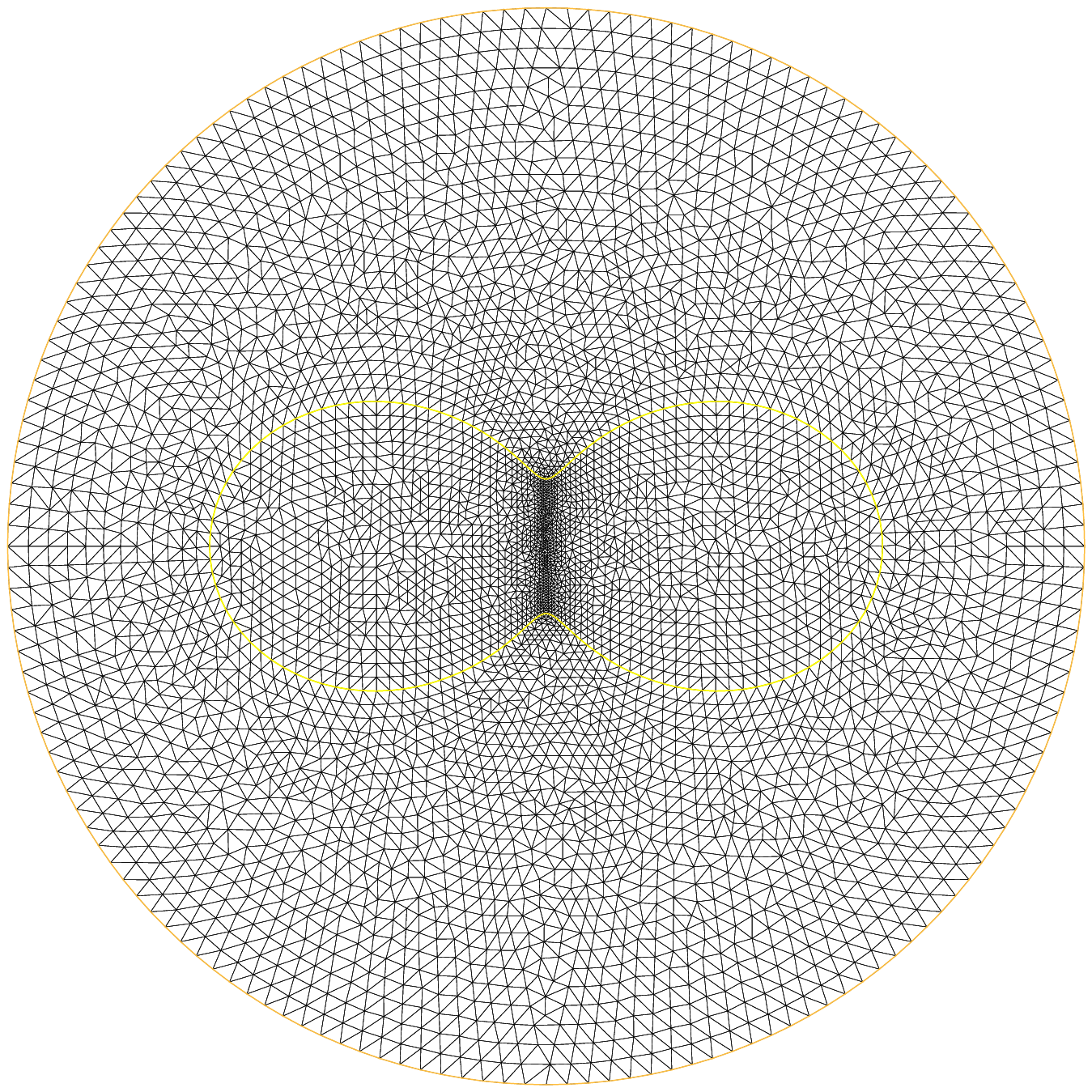}}
\subfigure[Solution in $\Omega$ \label{fig4-2}]
{\includegraphics[width = 0.35\textwidth]{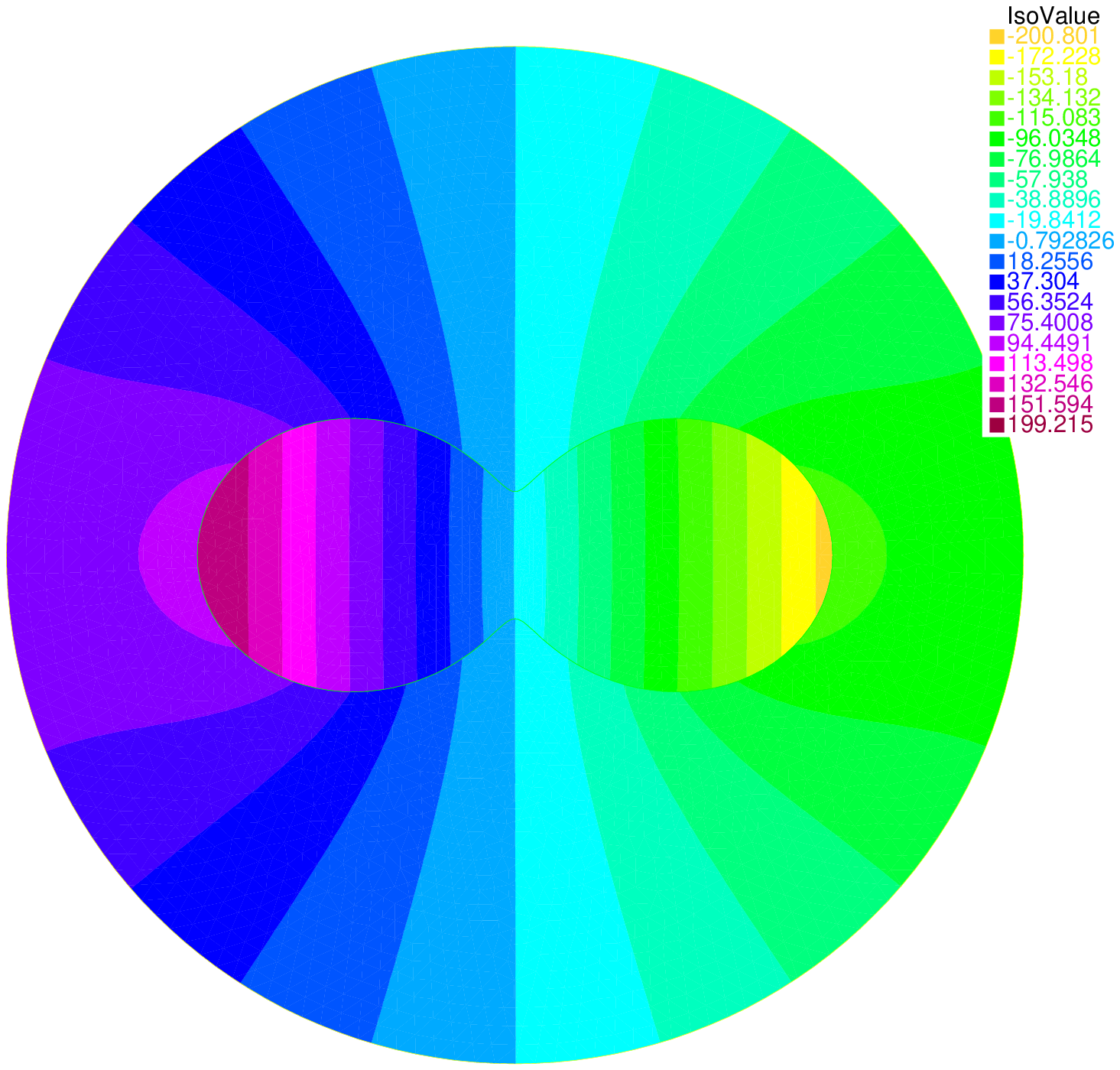}}\\[1mm]
\subfigure[the solution  $u_e$\label{fig4-3}]
{\includegraphics[width = 0.32\textwidth]{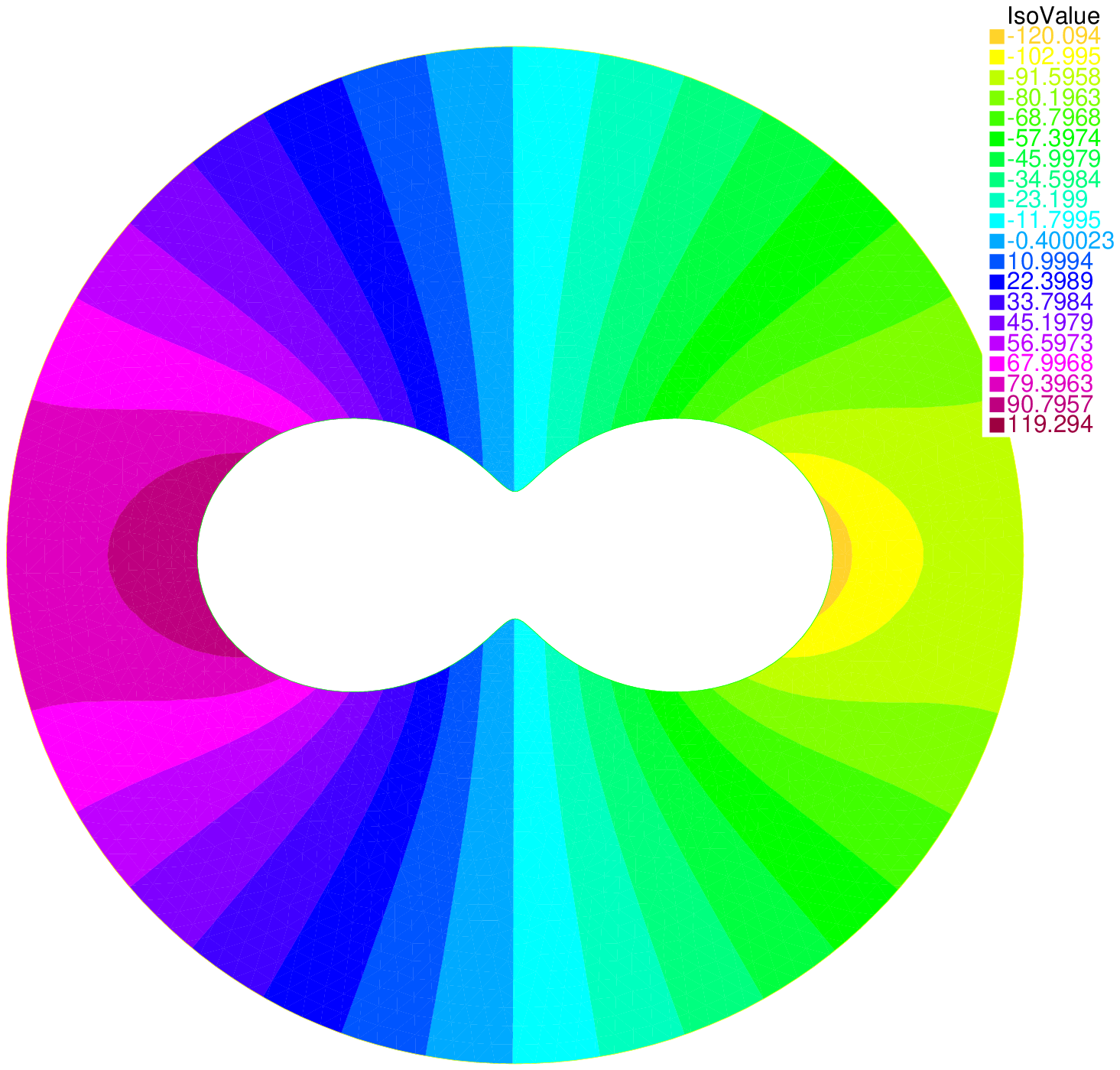}}
\subfigure[ the solution $u_i$ \label{fig4-4}]
{\includegraphics[width = 0.32\textwidth]{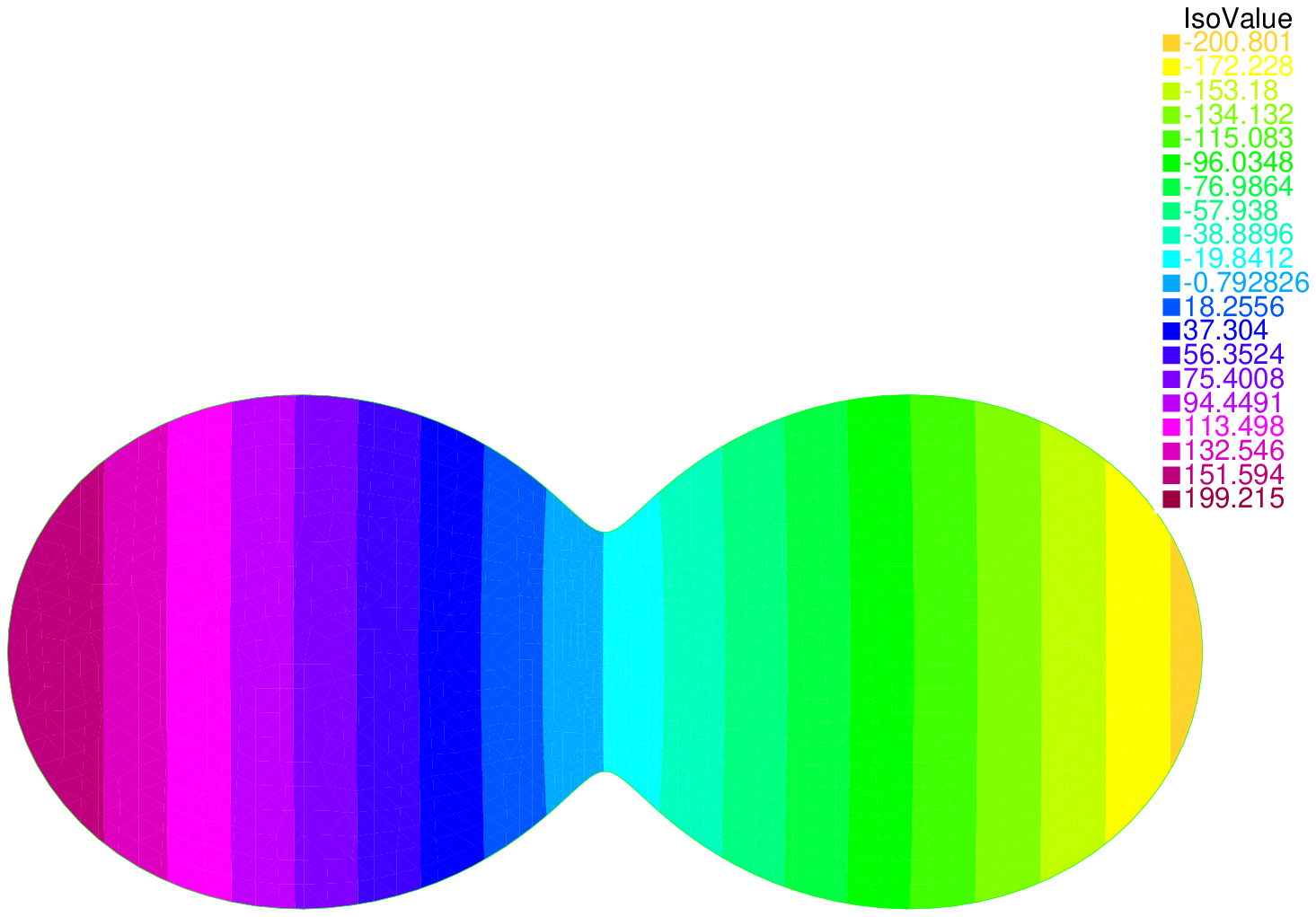}}
\end{center}
\caption{Cassini egg shape}
\end{figure}

\begin{figure}[h!]
\begin{center}
\subfigure[Snale mesh  \label{fig4-1snail}]
{\includegraphics[width = 0.35\textwidth]{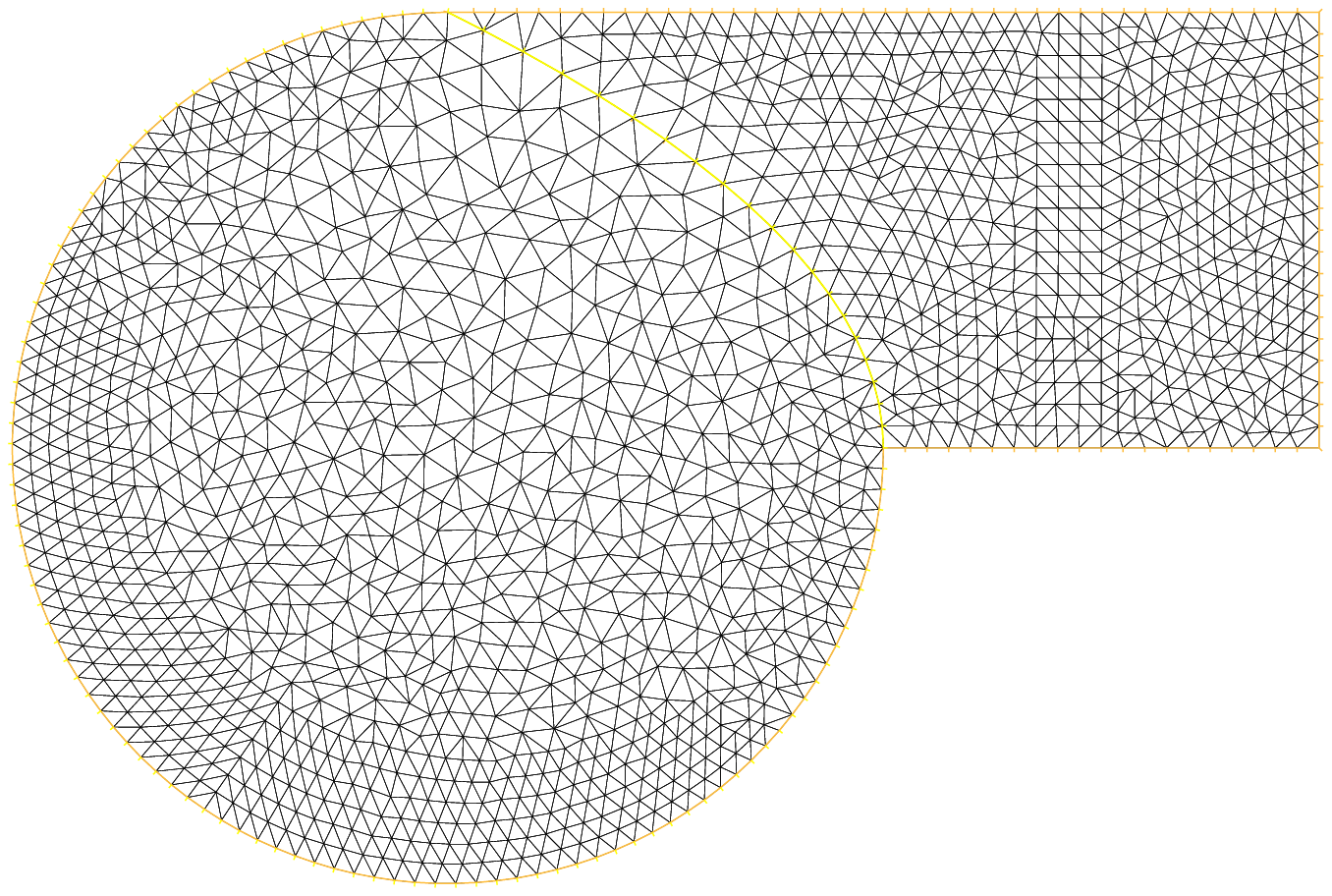}}
\subfigure[Solution in $\Omega$ \label{fig4-2snail}]
{\includegraphics[width = 0.35\textwidth]{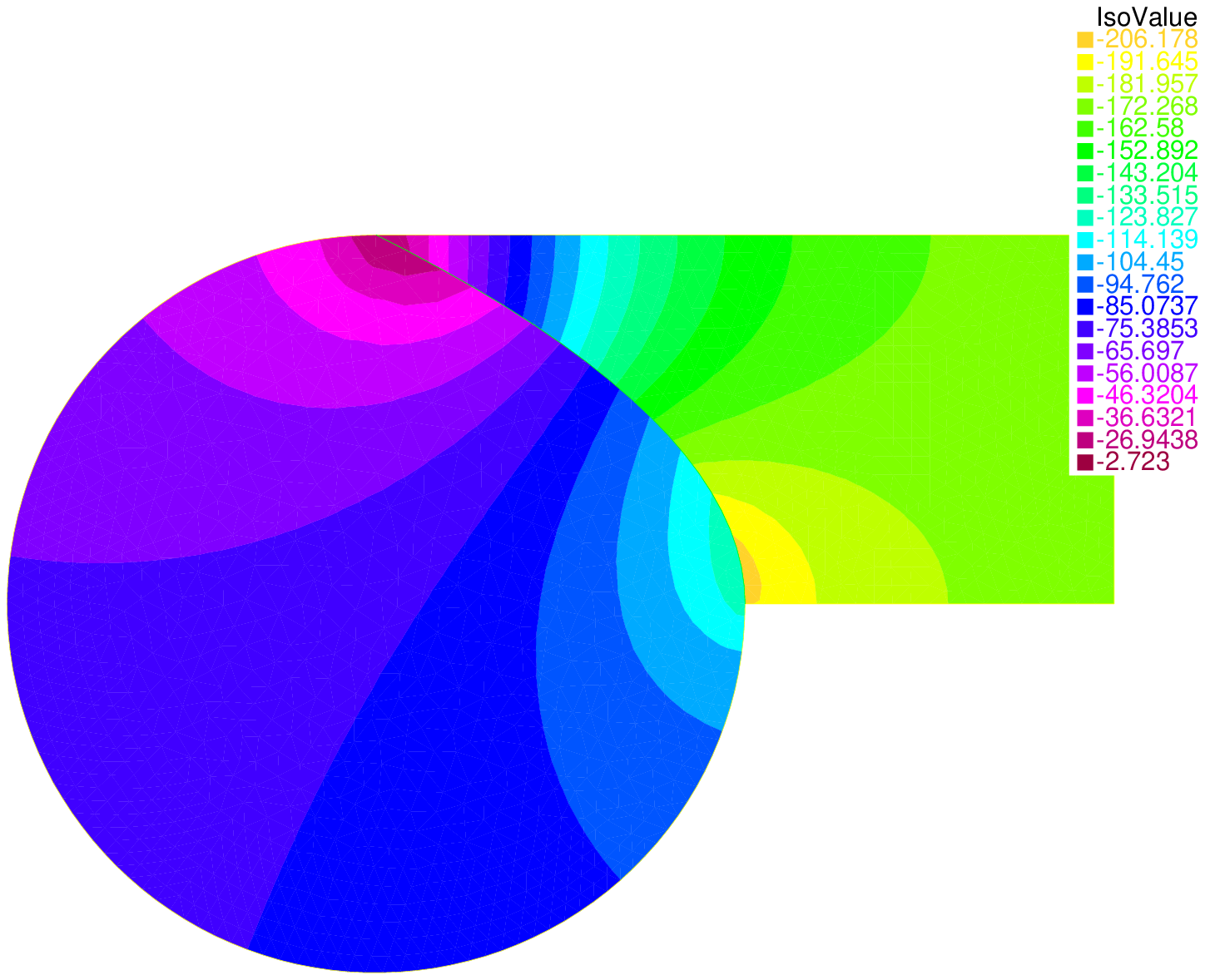}}\\[1mm]
\subfigure[the solution  $u_i$ at $T=0.025$\label{fig4-3snail}]
{\includegraphics[width = 0.32\textwidth]{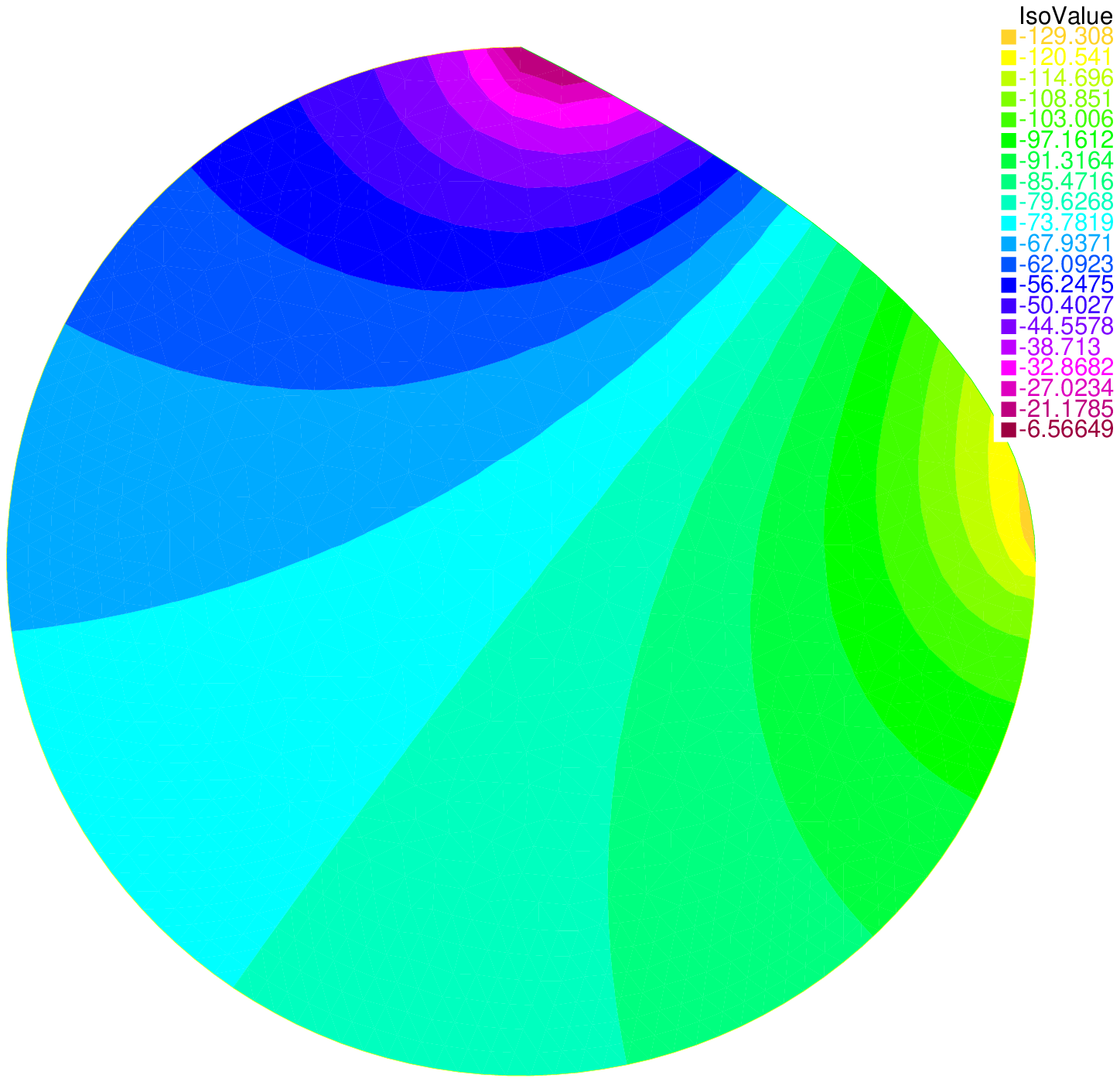}}
\subfigure[ the solution $u_e$ at $T=0.025$\label{fig4-4snail}]
{\includegraphics[width = 0.32 \textwidth]{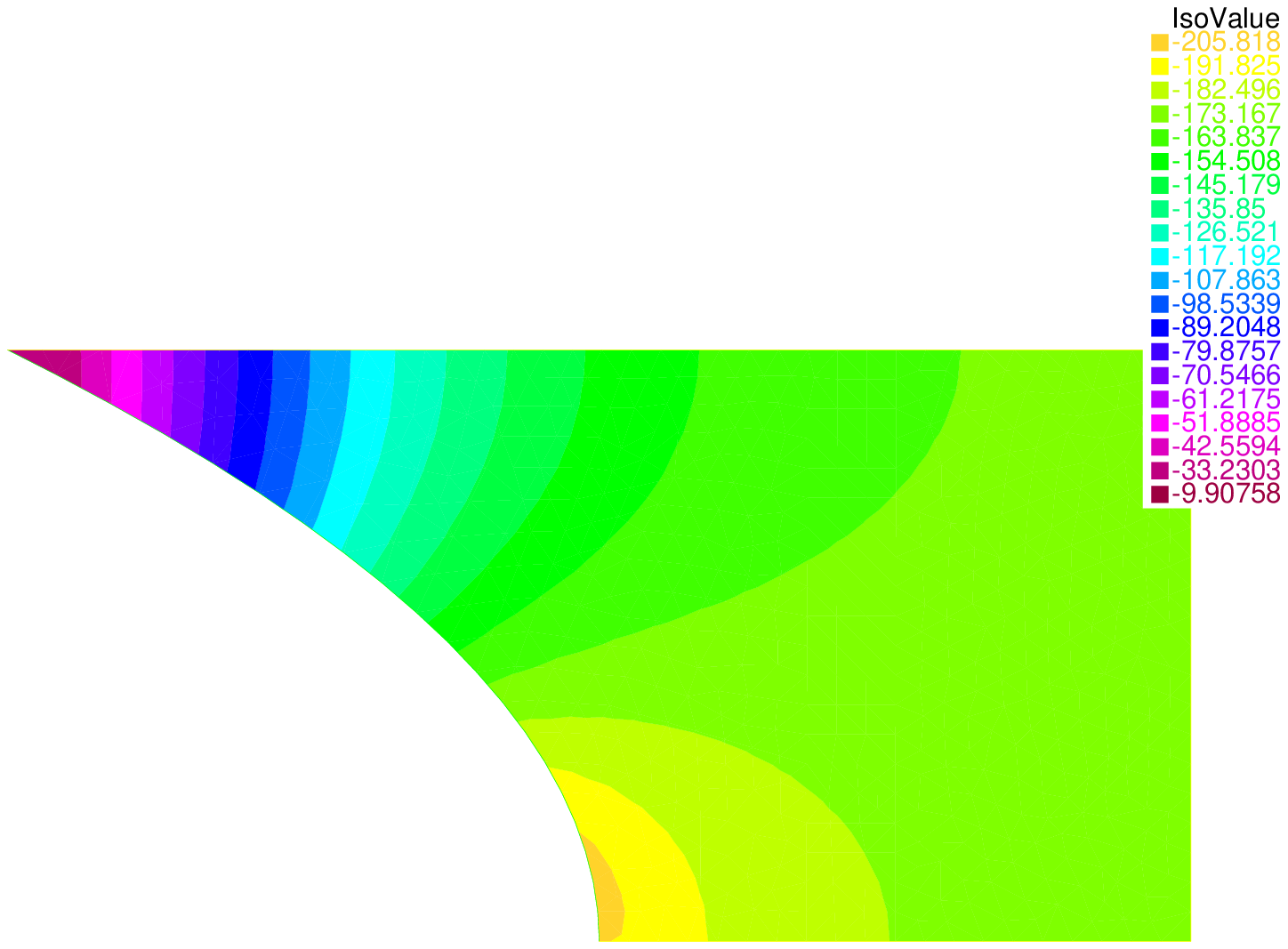}}
\end{center}
\caption{A snale cell}
\end{figure}

\newpage

\providecommand{\bysame}{\leavevmode\hbox to3em{\hrulefill}\thinspace}

\bibliographystyle{amsplain}

\begin{thebibliography}{10}

\bibitem{Ad75}
R.~A. Adams, \emph{Sobolev {S}paces}, Academic Press, New York, 1975.

\bibitem{ArEl12}
W.~Arendt and A.~F.~M. ter Elst, \emph{Sectorial forms and degenerate
  differential operators}, J. Operator Theory \textbf{67} (2012), no.~1,
  33--72.

\bibitem{AEKS14}
W.~Arendt, A.~F.~M. ter Elst, J.~B. Kennedy, and M.~Sauter, \emph{The
  {D}irichlet-to-{N}eumann operator via hidden compactness}, J. Funct. Anal.
  \textbf{266} (2014), no.~3, 1757--1786.

\bibitem{BaCo11}
H.~H. Bauschke and P.~L. Combettes, \emph{Convex analysis and monotone
  operator theory in {H}ilbert spaces}, CMS Books in Mathematics/Ouvrages de
  Math\'ematiques de la SMC, Springer, New York, 2011, With a foreword by
  H{\'e}dy Attouch.

\bibitem{BoVa04}
S. Boyd and L. Vandenberghe, \emph{Convex optimization}, Cambridge
  University Press, Cambridge, 2004.

\bibitem{Br73}
H.~Brezis, \emph{Op\'erateurs maximaux monotones et semi-groupes de
  contractions dans les espaces de {H}ilbert}, North Holland Mathematics
  Studies, vol.~5, North-Holland, Amsterdam, London, 1973.

\bibitem{ChHaKe14}
R.~Chill, D.~Hauer, and J.~Kennedy, \emph{Subgradients of
  $j$-elliptic functionals}, Preprint (2014).

\bibitem{Da00}
D.~Daners, \emph{Heat kernel estimates for operators with boundary conditions},
  Math. Nachr. \textbf{217} (2000), 13--41.

\bibitem{Fi55}
R. FitzHugh, \emph{Mathematical models of threshold phenomena in the nerve
  membrane}, Bull. Math. Biophysics \textbf{17} (1955), 257--278.

\bibitem{Fi81}
R. FitzHugh, \emph{Nonlinear sinusoidal currents in the {H}odgkin-{H}uxley model},
  The biophysical approach to excitable systems, Plenum, New York-London, 1981,
  pp.~25--35.

\bibitem{Fi83}
R. FitzHugh, \emph{Sinusoidal voltage clamp of the {H}odgkin-{H}uxley model},
  Biophys. J. \textbf{42} (1983), no.~1, 11--16.

\bibitem{GlPaPe95}
R. Glowinski, Tsorng-Whay Pan, and J. P{\'e}riaux, \emph{Fictitious
  domain/domain decomposition methods for partial differential equations},
  Domain-based parallelism and problem decomposition methods in computational
  science and engineering, SIAM, Philadelphia, PA, 1995, pp.~177--192.

\bibitem{He12}
F.~Hecht, \emph{New development in freefem++}, J. Numer. Math. \textbf{20}
  (2012), no.~3-4, 251--265.

\bibitem{HoHu52}
L.~Hodgkin and A.~Huxley, \emph{A quantitative description of membrane current
  and its application to conduction and excitation in nerve}, J. Physiol.
  \textbf{117} (1952), 500--544.

\bibitem{IvViMi10}
A.~Ivorra, J.~Villemejane, and L.~M.~T. Mir, \emph{Electrical modeling of the
  influence of medium conductivity on electroporation}, Phys. Chem. Chem. Phys.
  \textbf{12} (2010), 10055--10064.

\bibitem{KLPW14}
O. Kavian, M. Legu{\`e}be, C. Poignard, and L. Weynans,
  \emph{``{C}lassical'' electropermeabilization modeling at the cell scale}, J.
  Math. Biol. \textbf{68} (2014), no.~1-2, 235--265.

\bibitem{La02}
P.~D. Lax, \emph{Functional analysis}, Pure and Applied Mathematics (New
  York), Wiley-Interscience [John Wiley \& Sons], New York, 2002. 

\bibitem{Le89}
B. Lemaire, \emph{The proximal algorithm}, New methods in optimization and
  their industrial uses ({P}au/{P}aris, 1987), Internat. Schriftenreihe Numer.
  Math., vol.~87, Birkh\"auser, Basel, 1989, pp.~73--87.

\bibitem{Li69}
{J.-L.} Lions, \emph{Quelques m\'ethodes de r\'esolution des probl\`emes aux
  limites non lin\'eaires}, Dunod, Gauthier-Villars, Paris, 1969.

\bibitem{Li88I}
P.-L. Lions, \emph{On the {S}chwarz alternating method. {I}}, First
  {I}nternational {S}ymposium on {D}omain {D}ecomposition {M}ethods for
  {P}artial {D}ifferential {E}quations ({P}aris, 1987), SIAM, Philadelphia, PA,
  1988, pp.~1--42.

\bibitem{Mz85}
V.~G. Maz'ja, \emph{Sobolev spaces}, Springer Series in Soviet
  Mathematics, Springer-Verlag, Berlin, 1985, Translated from the Russian by T.
  O. Shaposhnikova.

\bibitem{NeKr99}
J.~C. Neu and W.~Krassowska, \emph{Asymptotic model of electroporation},
  Phys. Rev. E \textbf{59} (1999), 3471--3482.

\bibitem{PePo13}
R. Perrussel and C. Poignard, \emph{Asymptotic expansion of steady-state
  potential in a high contrast medium with a thin resistive layer}, Appl. Math.
  Comput. \textbf{221} (2013), 48--65.

\bibitem{TeGoRo05}
J.~Teissi\'e, M.~Golzio, and M.~P. Rols, \emph{Mechanisms of cell membrane
  electropermeabilization: a minireview of our present (lack of?) knowledge},
  Biochim. Biophys. Acta \textbf{1724} (2005), 270--280.

\bibitem{WaBi06}
A. W{\"a}chter and L.~T. Biegler, \emph{On the implementation of an
  interior-point filter line-search algorithm for large-scale nonlinear
  programming}, Math. Program. \textbf{106} (2006), no.~1, Ser. A, 25--57.

\end{thebibliography}
%\begin{thebibliography}{10}

\def\cprime{$'$} 
  \def\ocirc#1{\ifmmode\setbox0=\hbox{$#1$}\dimen0=\ht0 \advance\dimen0
  by1pt\rlap{\hbox to\wd0{\hss\raise\dimen0
  \hbox{\hskip.2em$\scriptscriptstyle\circ$}\hss}}#1\else {\accent"17 #1}\fi}
  \def\cprime{$'$} \def\cprime{$'$} \def\cprime{$'$}
\providecommand{\bysame}{\leavevmode\hbox to3em{\hrulefill}\thinspace}
\providecommand{\MR}{\relax\ifhmode\unskip\space\fi MR }
% \MRhref is called by the amsart/book/proc definition of \MR.
\providecommand{\MRhref}[2]{%
  \href{http://www.ams.org/mathscinet-getitem?mr=#1}{#2}
}
\providecommand{\href}[2]{#2}

\end{document}